\newtheorem{theorem}{Theorem}
\newtheorem{lemma}{Lemma}
\newtheorem{corollary}{Corollary}
\newtheorem{remark}{Remark}
\newtheorem{definition}{Definition}
\newtheorem{algorithm}{Algorithm}
\newenvironment{proof}{\begin{trivlist} \item[\hskip\labelsep{\it Proof.}]}{$\hfill\Box$\end{trivlist}}
\newcommand{\rd}{\,\mathrm{d}}
\newcommand{\bsh}{\boldsymbol{h}}
\newcommand{\bsx}{\boldsymbol{x}}
\newcommand{\bsg}{\boldsymbol{g}}
\newcommand{\bst}{\boldsymbol{t}}
\newcommand{\bsw}{\boldsymbol{w}}
\newcommand{\bsgamma}{\boldsymbol{\gamma}}
\newcommand{\bszero}{\boldsymbol{0}}
\newcommand{\bsone}{\boldsymbol{1}}
\newcommand{\RR}{\mathbb{R}}
\newcommand{\back}{\backslash}
\newcommand{\bs}{\boldsymbol}
\newcommand{\FF}{\mathbb{F}}
\newcommand{\NN}{\mathbb{N}}
\newcommand{\PP}{\mathbb{P}}
\newcommand{\ii}{\mathrm{i}}
\newcommand{\ZZ}{\mathbb{Z}}
\newcommand{\cP}{\mathcal{P}}
\newcommand{\wal}{\mathrm{wal}}
\title{A reduced fast construction of polynomial lattice point sets with low weighted star discrepancy}
\author{Ralph Kritzinger\thanks{R. Kritzinger is supported by the Austrian Science Fund (FWF): Project F5509-N26, which is a part of the Special Research Program "Quasi-Monte Carlo Methods: Theory and Applications".} ,
Helene Laimer\thanks{H. Laimer is supported by the Austrian Science Fund (FWF): Project F5506-N26, which is a part of the Special Research Program "Quasi-Monte Carlo Methods: Theory and Applications".} ,
Mario Neum\"{u}ller\thanks{M. Neum\"{u}ller is supported by the Austrian Science Fund (FWF): Project F5505-N26, which is a part of the Special Research Program "Quasi-Monte Carlo Methods: Theory and Applications".}}
\date{}
\begin{document}

\maketitle

\begin{abstract}
\noindent
The weighted star discrepancy is a quantitative measure for the performance
of point sets in quasi-Monte Carlo algorithms for numerical integration.
We consider polynomial lattice point sets, whose generating vectors can be obtained by a
component-by-component construction to ensure a small weighted star discre-pancy. 
Our aim is to significantly reduce the construction cost of such generating vectors by restricting the size of
the set of polynomials from which we select the components of the vectors. To gain this reduction we exploit the fact that the weights of the spaces we consider decay very fast.
\end{abstract} 

\centerline{\begin{minipage}[hc]{130mm}{
{\em Keywords:} weighted star discrepancy, polynomial lattice point sets, quasi-Monte Carlo integration, component-by-component algorithm\\
{\em MSC 2000:} 11K06, 11K38, 65D30, 65D32}
\end{minipage}}

 \allowdisplaybreaks
 
\section{Introduction}

A convenient way to approximate the value of an integral $$ I_s(F):=\int_{[0,1)^s} F(\bsx)\rd\bsx $$ over the $s$-dimensional unit cube is to use a quasi-Monte Carlo rule of the form
  \begin{align}\label{eq:QMCkrilaineupoly}
	Q_{N,s}(F):=\frac{1}{N}\sum_{n=0}^{N-1}F(\bsx_n).
	\end{align}
The integrand $F$ usually stems from some suitable (weighted) function space and the multiset $\cP$ of integration nodes $\bsx_0,\bsx_1,\dots,\bsx_{N-1}$ in the algorithm $Q_{N,s}(F)$  is chosen deterministically from $[0,1)^s$.
For comprehensive information on quasi-Monte Carlo algorithms consult, e.g.,~\cite{DP, DKS13,LP,Nied92book}.
The quality of a quasi-Monte Carlo rule is for instance measured by some notion of discrepancy.
In this paper we consider the weighted star discrepancy, which has been introduced by Sloan and Wo\'{z}niakowski in \cite{SW}, exploiting the insight that the weights reflect the influence of different coordinates on the integration error. Let $[s]:=\{1,2,\dots,s\}$ and consider a weight sequence $\bsgamma=(\gamma_{\mathfrak{u}})_{\mathfrak{u}\subseteq [s]}$ of nonnegative real numbers, i.e., every group of variables $(x_i)_{i\in \mathfrak{u}}$ is equipped with a weight $\gamma_{\mathfrak{u}}$. Roughly speaking, a small weight indicates that the corresponding variables contribute little to the integration problem. For simplicity, throughout this paper we only consider product weights, defined as follows. Given a non-increasing sequence of positive real numbers $(\gamma_j)_{j\geq 1}$ with $\gamma_j\leq 1$ we set $\gamma_{\mathfrak{u}}:=\prod_{j\in\mathfrak{u}}\gamma_j$ and $\gamma_{\emptyset}:=1$.
\begin{definition}\label{StarDisckrilaineupoly}
Let $\bsgamma = (\gamma_{\mathfrak{u}})_{\mathfrak{u} \subseteq [s]}$ be a weight sequence and $\cP = \left\{ \bsx_0 , \dotsc , \bsx_{N-1} \right\} \subseteq [0,1)^s$ be an $N$-element point set. The local discrepancy of the point set $\cP$ at $\bst = (t_1, \dotsc, t_s) \in (0,1]^s$ is defined as
$$\Delta(\bst,\cP) := \frac{1}{N}\sum\limits_{n=0}^{N-1}{\bsone_{[\bszero,\bst)}(\bsx_n)} - \prod_{j=1}^s{t_j},$$
where $\bsone_{[\bszero,\bst)}$ denotes the characteristic function of $[\bszero,\bst) := [0,t_1)\times \cdots \times [0,t_s)$. The weighted star discrepancy of $\cP$ is then defined as
\begin{equation*}
D_{N, \bsgamma}^*(\cP) := \sup_{\bst \in (0,1]^s}{ \max_{\emptyset \neq \mathfrak{u} \subseteq [s]}{\gamma_{\mathfrak{u}}|\Delta((\bst_{\mathfrak{u}},\bsone),\cP)|}},
\end{equation*}
where $(\bst_{\mathfrak{u}},\bsone)$ denotes the vector $(\tilde{t}_1, \dotsc , \tilde{t}_s)$ with $\tilde{t}_j = t_j$ if $j \in \mathfrak{u}$ and $\tilde{t}_j = 1$ if $j \notin \mathfrak{u}$.
\end{definition} 
A relation between the integration error of quasi-Monte Carlo rules and the weighted star discrepancy is given by the Koksma-Hlawka type inequality (see~\cite{SW})
$$ |Q_{N,s}(F)-I_s(F)|\leq D_{N,\bsgamma}^{\ast}(\cP)\|F\|_{\bsgamma}, $$
where $\|\cdot\|_{\bsgamma}$ is some norm which depends only on the weight sequence $\bsgamma$ but not on the point set $\cP$. \vspace{3mm}

It turns out that lattice point sets (see, e.g., \cite[Chapter 5]{Nied92book}, \cite{LP}) and polynomial lattice point sets (see, e.g., \cite[Chapter 4]{Nied92book}, \cite{Nied92paper}, \cite[Chapter 10]{DP}) are often a good choice as sample points in \eqref{eq:QMCkrilaineupoly}. These two kind of point sets are strongly connected and have a lot of parallel tracks in their analysis. However, there are some situations were one type of point set is superior to the other in terms of error bounds or the size of the function classes where they yield good results for numerical integration. Thus it is beneficial to have constructions for lattice point sets as well as for polynomial lattice point sets at hand. For a detailed comparison of lattice point sets and polynomial lattice point sets see, e.g., \cite{P12}. Also, Ch. Schwab, in response to the first author's talk about constructing lattice point sets at the MCQMC 2016 conference in Stanford, pointed out that it would be an interesting problem to extend the result in \cite{KL15} to polynomial lattice point sets. Thus, in this paper we study polynomial lattice point sets, a special class of point sets with low weighted star discrepancy, introduced by Niederreiter in \cite[Chapter 4]{Nied92book}, \cite{Nied92paper}. For a prime number $p$, let $\FF_p$ be the finite field of order $p$. We identify $\FF_p$ with the set $\{0,1,\ldots,p-1\}$ equipped with the modulo $p$ arithmetic. We denote by $\FF_p[x]$ the set of polynomials over $\FF_p$ and by $\FF_p((x^{-1}))$ the field of formal Laurent series over $\FF_p$ with elements of the form
$$ L=\sum_{l=\omega}^{\infty}t_lx^{-l}, $$
where $\omega\in\ZZ$ and $t_l\in\FF_p$ for all $l\geq \omega$. For a given dimension $s\geq 2$ and some integer $m\geq 1$ we choose a so-called modulus $f\in \FF_p[x]$ with $\deg(f)=m$ as well as polynomials $g_1,\dots,g_s \in \FF_p[x]$. The vector $\bsg=(g_1,\dots,g_s)$ is called the generating vector of the polynomial lattice point set. Further, we introduce the map $\phi_m: \FF_p((x^{-1})) \to [0,1)$ such that
$$ \phi_m\left(\sum_{l=\omega}^{\infty}t_lx^{-l}\right)=\sum_{l=\max\{1,\omega\}}^{m}t_lp^{-l}. $$
With $n\in \{0,1,\dots,p^m-1\}$ we associate the polynomial
$$ n(x)=\sum_{r=0}^{m-1}n_rx^r \in \FF_p[x], $$ 
as each such $n$ can uniquely be written as $n=n_0+n_1p+\dots+n_{m-1}p^{m-1}$ with digits $n_r\in\{0,1,\dots,p-1\}$ for all $r\in\{0,1,\dots,m-1\}$. 
With this notation, the polynomial lattice point set $\cP(\bsg,f)$ is defined as the set of $N:=p^m$ points
$$ \bsx_n=\left(\phi_m\left(\frac{n(x)g_1(x)}{f(x)}\right),\dots,\phi_m\left(\frac{n(x)g_s(x)}{f(x)}\right)\right)\in [0,1)^s $$
for $0\leq n \leq p^m-1$. See also \cite[Chapter 10]{DP}.

In the following, by $G_{p,m}$ we denote the set of all polynomials $g$ over $\FF_p$ with $\deg(g)<m$. Further we define
\begin{align}\label{eq:gpmkrilaineupoly}
 G_{p,m}(f):=\{g\in G_{p,m} \mid \gcd(g,f)=1\}.
\end{align}
For the weighted star discrepancy of a polynomial lattice point set we simply write $D_{N,\bsgamma}^{\ast}(\bsg,f)$. 

Niederreiter~\cite{Nied92book} proved the existence of polynomial lattice point sets with low unweighted star discrepancy by averaging arguments. Generating vectors of good polynomial lattice point sets can be constructed by a component-by-component (CBC) construction. The standard structure of CBC constructions is as follows. We start by setting the first coordinate of the generating vector equal to 1. After this first step we proceed by increasing the dimension of the generating vector by one in each step until we have a generating vector $(g_1, \dots, g_s)$ of full size $s$. That is, all previously chosen components stay the same and one new component is added. This new coordinate is chosen from a given search set, most commonly from $G_{p,m}(f)$ given by \eqref{eq:gpmkrilaineupoly}. Usually it is determined such that the weighted star discrepancy of the lattice point set, corresponding to the generating vector, consisting of all previously chosen components plus one additional component, is minimized as a function of this last component.

Such constructions were provided in~\cite{DLP05} for an irreducible modulus $f$ and in~\cite{DKLP07} for a reducible $f$. In these papers, the authors considered the unweighted star discrepancy as well as its weighted version, which we study here. It is the aim of the present paper to speed up these constructions by reducing the search sets for the components of the generating vector $\bsg$ according to each component's importance. It is the nature of product weighted spaces that the components $g_j$ of the generating vector have less and less influence on the quality of the corresponding polynomial lattice point as $j$ increases.
 Roughly speaking this is due to the weights $(\gamma_j)$ that are becoming ever smaller with increasing index $j$. We want to exploit this property in the following way. As the components' influence is decreasing with their indices we want to use less and less time and computational cost to choose these components. To achieve this we choose them from smaller and smaller search sets, which are defined as follows. Let $w_1\leq w_2 \leq \cdots$ be a non-decreasing sequence of nonnegative integers. This sequence of $w_j$'s is determined in accordance with the weight sequence $\bsgamma$. Loosely speaking, the smaller $\gamma_j$, the bigger $w_j$ is chosen. For $w\in\NN_0$ with $w<m$ we define $G_{p,m-w}$ and $G_{p,m-w}(f)$ analogously to $G_{p,m}$ and $G_{p,m}(f)$, respectively. Further we set
$$ \mathcal{G}_{p,m-w}(f):=\begin{cases}
                    G_{p,m-w}(f) & \mbox{if\,} w<m, \\
										\{1\in\FF_p[x]\} & \mbox{if\,} w\geq m
                 \end{cases} $$
for any $w\in\NN_0$. For $w<m$ these sets have cardinality $p^{m-w}-1$ in the case of an irreducible modulus $f$ and $p^{m-w-1}(p-1)$ for the special case $f: \FF_p\to\FF_p, x \mapsto x^m$. We will consider these two cases in what follows.
Finally, for $d\in [s]$, we define $\mathcal{G}_{p,m-\bsw}^d(f):=\mathcal{G}_{p,m-w_1}(f)\times \dots \times \mathcal{G}_{p,m-w_d}(f)$.
 The idea is to choose the $i$th component of $\bsg$ of the form $x^{w_i}g_i$, where $g_i\in \mathcal{G}_{p,m-w_i}(f)$, i.e., the search set for the $i$th component is reduced by a factor $p^{-\min\{w_i,m\}}$ in comparison to the standard CBC construction. We will show that under certain conditions on the weights $\bsgamma$ and the parameters $w_i$ a polynomial lattice point set constructed according to our reduced CBC construction has a low weighted star discrepancy of order $N^{-1+\delta}$ for all $\delta>0$. The standard CBC construction (cf. \cite{SR02}) can be done in $\mathcal{O}(sN^2)$ operations. To speed up the construction, in a first step, making use of ideas from Nuyens and Cools~\cite{NC2, NC} on fast Fourier transformation, the construction cost can be reduced to $\mathcal{O}(sN\log{N})$, as for example done in \cite{DLP05}. Combining this with our reduced search sets we obtain a computational cost that is independent of the dimension eventually. Reduced CBC constructions have been introduced first by Dick et al. in~\cite{DKLP15} for lattice and polynomial lattice point sets with a small worst case integration error in Korobov and Walsh spaces, respectively, and have also been investigated in~\cite{KL15} for lattice point sets with small weighted star discrepancy. \vspace{3mm}

An interesting aspect of the discrepancy of high dimensional point sets is the so-called tractability of discrepancy (see, e.g.,~\cite{NW1,NW2,NW3} for detailed information). For $N,s\in\NN$ let
$$\mathrm{disc}_{\infty}(N,s):=\inf_{\substack{\cP\subseteq [0,1)^s \\ \#\cP=N}}D_{N,\bsgamma}^{\ast}(\cP),$$ the $N$th minimal star discrepancy. To introduce the concept of tractability of discrepancy we define the information complexity (also called the inverse of the weighted star discrepancy) as
$$ N^{\ast}(s,\varepsilon):=\min\{N\in\NN \mid \mathrm{disc}_{\infty}(N,s) \leq \varepsilon\}. $$
Thus $N^{\ast}(s,\varepsilon)$ is the minimal number of points required to achieve a weighted star discrepancy of at most $\varepsilon$. To keep the construction cost of our generating vector low, it is, of course, beneficial to have a small information complexity and thus to stand a chance to have a polynomial lattice point set of small size. This is why we are interested in how fast the information complexity grows when $s$ and $\varepsilon^{-1}$ tend to infinity. 
Tractability describes this dependence of the information complexity on the dimension $s$ and the error demand $\varepsilon$. The best we can hope for is the case where $N^{\ast}(s,\varepsilon)$ is independent of $s$ and depends at most polynomially on $\varepsilon^{-1}$. To be more precise, we say that we achieve strong polynomial tractability if there exist constants $C,\tau>0$ such that
$$ N^{\ast}(s,\varepsilon)\leq C\varepsilon^{-\tau} $$
for all $s\in\NN$ and all $\varepsilon\in (0,1)$. Roughly speaking, a problem is considered tractable if its information complexity's dependence on $s$ and $\varepsilon^{-1}$ is not exponential. Taking weights into account in the definition of discrepancy can sometimes overcome the so-called curse of dimensionality, i.e., an exponential dependence of $N^{\ast}(s,\varepsilon)$ on $s$. We will show that our reduced fast CBC algorithm finds a generating vector $\bsg$ of a polynomial lattice point set that achieves strong polynomial tractability provided that
$$ \sum_{j=1}^{\infty}\gamma_j p^{w_j}<\infty $$
with a construction cost of
$$ \mathcal{O}\left(N + \min\{s,t\}N + N\sum_{d=1}^{\min\{s,t\}}(m-w_d)p^{-w_d}\right) $$
operations, where $t=\max\{j\in\NN\mid w_j<m\}$.\vspace{5mm}

\noindent
Before stating our main results we would like to discuss a motivating example. Consider first the standard CBC construction as treated in~\cite{DKLP07,DLP05}, where $w_j=0$ for all $j\geq 0$. In this case, a sufficient condition for strong polynomial tractability is $\sum_{j=1}^{\infty}\gamma_j<\infty$, which for instance is satisfied for the special choices $\gamma_j=j^{-2}$ and $\gamma_j=j^{-1000}$. However, in the second example the weights decay much faster than in the first. We can make use of this fact by introducing the sequence $\bsw=(w_j)_{j\geq 0}$ such that the condition $\sum_{j=1}^{\infty}\gamma_jp^{w_j}<\infty$ holds, while still achieving strong polynomial tractability (see Corollary~\ref{cor:tractabilitykrilaineupoly}). This way, we can reduce the size of the search sets for the components of the generating vector if the weights $\gamma_j$ decay very fast. Consider for example the weight sequence $\gamma_j=j^{-k}$ for some $k>1$. For $w_j=\lfloor (k-\alpha)\log_{p}{j}\rfloor$ with arbitrary $1<\alpha<k$ we find
$$ \sum_{j=1}^{\infty}\gamma_jp^{w_j}\leq \sum_{j=1}^{\infty}j^{-k}j^{k-\alpha}= \sum_{j=1}^{\infty}j^{-\alpha}=\zeta(\alpha)<\infty,$$
where $\zeta$ denotes the Riemann Zeta function. Observe that for large $k$, i.e., fast decaying weights, we may choose smaller search sets and thereby speed up the CBC algorithm.\vspace{5mm}
\noindent
This paper is organized as follows. In the next section we give an algorithm for constructing polynomial lattice point sets and we derive an upper bound on the weighted star discrepancy of the point set constructed with this algorithm. We also give tractability results and analyze the computational cost of our algorithm. At first, we consider the case where $f: \FF_p\to\FF_p, x \mapsto x^m$. Then we consider the case where the modulus of the polynomial lattice point set is irreducible. 

\section{A reduced CBC construction}

In this section we present a CBC construction for the vector $\left( x^{w_1}g_1 , \dotsc , x^{w_s} g_s \right)$ and an upper bound for the weighted star discrepancy of the corresponding polynomial lattice point set.

First note that if $\bsg\in G_{p,m}^s$, then it is known (see~\cite{DLP05}) that
\begin{equation} \label{generalkrilaineupoly} D_{N,\bsgamma}^{\ast}(\bsg,f)\leq \sum_{\substack{\mathfrak{u}\subseteq [s] \\ \mathfrak{u}\neq \emptyset}}\gamma_{\mathfrak{u}}\left(1-\left(1-\frac{1}{N}\right)^{|\mathfrak{u}|}\right)+R^s_{\bsgamma}(\bsg,f), \end{equation}
where in the case of product weights we have
\begin{equation} \label{rgammakrilaineupoly} R^s_{\bsgamma}(\bsg,f)=\sum_{\substack{\bsh\in G_{p,m}^s\setminus \{\bszero\} \\ \bsh \cdot \bsg \equiv 0 \bmod{f}}}\prod_{i=1}^{s}r_p(h_i,\gamma_i). \end{equation}
Here, for elements $\bsh=(h_1,\dots,h_s)$ and $\bsg=(g_1,\dots,g_s)$ in $G_{p,m}^s$ we define the scalar product by $\bsh \cdot \bsg:=h_1g_1+\dots+h_sg_s$. The numbers $r_p(h,\gamma)$ for $h\in G_{p,m}$ and $\gamma \in \RR$ are defined as
$$  r_p(h,\gamma)=\begin{cases}
                       1+\gamma & \mbox{if \,} h=0, \\
                       \gamma r_p(h) & \mbox{otherwise,}
                      \end{cases}  
$$
where for $h=h_0+h_1x+\dots+h_{a}x^{a}$ with $h_a\neq 0$ we set
$$ r_p(h)=\frac{1}{p^{a+1}\sin^2\left(\frac{\pi}{p}h_{a}\right)}.$$
Thus, in order to analyze the weighted star discrepancy of a polynomial lattice point set it suffices to investigate the quantity $R^s_{\bsgamma}(\bsg,f)$. This is due to the result of Joe \cite{J06}, who proved that for any summable weight sequence $(\gamma_j)_{j \geq 1}$ we have
\begin{align*}
 \sum_{\substack{\mathfrak{u}\subseteq [s] \\ \mathfrak{u}\neq \emptyset}}\gamma_{\mathfrak{u}}\left(1-\left(1-\frac{1}{N}\right)^{|\mathfrak{u}|}\right) \leq \frac{\max(1,\Gamma)\mathrm{e}^{\sum_{i=1}^{\infty}\gamma_i}}{N}\,,
\end{align*}
with $\Gamma:=\sum_{i=1}^{\infty}\frac{\gamma_i}{1+\gamma_i}$.

\begin{algorithm}\label{reducedCbckrilaineupoly}
Let $p\in\PP$, $m\in\NN$, $f\in \FF_p[x]$ and let $(w_j)_{j \geq 1}$ be a non-decreasing sequence of nonnegative integers and consider product weights $(\gamma_j)_{j \geq 1}$. Construct
$\left( g_1 , \dotsc , g_s \right) \in \mathcal{G}_{p,m-\bsw}^s(f)$ as follows:
\begin{enumerate}
	\item Set $g_1 = 1$.
	\item For $d \in [s-1]$ assume $(g_1, \dotsc , g_d)\in \mathcal{G}_{p,m-\bsw}^d(f)$ to be already found. Choose $g_{d+1} \in \mathcal{G}_{p,m-w_{d+1}}(f)$ such that
	$$R_{\bsgamma}^{d+1}{(x^{w_1}g_1, \dotsc , x^{w_d}g_d, x^{w_{d+1}}g_{d+1})}$$
	is minimized as a function of $g_{d+1}$.
	\item Increase $d$ by 1 and repeat the second step until $\left(g_1 , \dotsc , g_s \right)$ is found.
\end{enumerate}
\end{algorithm}

\begin{remark}
Of course we have $\mathcal{G}_{p,m-\bsw}^s(f) \subseteq G_{p,m}^s$, and thus in Algorithm~\ref{reducedCbckrilaineupoly} it indeed suffices to consider $R_{\bsgamma}^{d+1}$ rather than the weighted star discrepancy. 
\end{remark}

In the algorithm above, the search set is reduced for each coordinate of $\left( g_1 , \dotsc , g_s \right)$ according to its importance, as with increasing $w_j$ the search set becomes smaller, as the weight $\gamma_j$ and thus the corresponding component's influence on the quality of the generating vector decreases. For this reason we call Algorithm~\ref{reducedCbckrilaineupoly} a reduced CBC algorithm.  We will now study Algorithm \ref{reducedCbckrilaineupoly} for different choices of $f$.

\subsection{Polynomial lattice point sets for $f(x)=x^m$}
We will now study the interesting case where $f \colon \FF_p \to \FF_p, x \mapsto x^m$. Throughout the rest of this section we write $x^m$ instead of $f$ to emphasize our special choice of $f$. Note that for $g \in \FF_p((x^{-1}))$ the Laurent series $g/f$ can be easily computed in this case by shifting the coefficients of $g$ $m$ times to the left. It is the aim of this section to prove the following theorem:

\begin{theorem} \label{theokrilaineupoly}
   Let $\bsgamma = (\gamma_j)_{j \geq 1}$ and $\bsw$ with $0 = w_1 \leq w_2 \leq \cdots$. Let further $(g_1,\dots,g_s)\in \mathcal{G}_{p,m-\bsw}^s(x^m)$ be constructed using Algorithm \ref{reducedCbckrilaineupoly}. Then we have for every $d\in [s]$
     $$ R_{\bsgamma}^d((x^{w_1}g_1,\dots,x^{w_d}g_d),x^m)\leq \frac{1}{p^m}\prod_{i=1}^d \left(1+\gamma_i+\gamma_i 2p^{\min\{w_i,m\}}m\frac{p^2-1}{3p}\right). $$
\end{theorem}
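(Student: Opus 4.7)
I would prove Theorem~\ref{theokrilaineupoly} by induction on $d$. The base case $d=1$ is immediate: since $w_1=0$ and $g_1=1$, no nonzero $h_1\in G_{p,m}$ can satisfy $h_1\equiv 0\bmod x^m$ (because $\deg h_1<m$), so $R_{\bsgamma}^1((1),x^m)=0$, which is dominated by the positive right-hand side.

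For the inductive step, I would split the sum defining $R_{\bsgamma}^{d+1}$ according to whether the last coordinate $h_{d+1}$ of $\bsh$ vanishes:
\[
R_{\bsgamma}^{d+1}((x^{w_1}g_1,\ldots,x^{w_{d+1}}g_{d+1}),x^m)=(1+\gamma_{d+1})R_{\bsgamma}^{d}((x^{w_1}g_1,\ldots,x^{w_d}g_d),x^m)+\gamma_{d+1}\theta(g_{d+1}),
\]
where $\theta(g_{d+1})$ collects all contributions with $h_{d+1}\neq 0$. Since the first summand is independent of $g_{d+1}$, the CBC step of Algorithm~\ref{reducedCbckrilaineupoly} minimises $\theta$ as well, so $\theta(g_{d+1})$ is bounded by its mean over the restricted search set $\mathcal{G}_{p,m-w_{d+1}}(x^m)$, whose cardinality is $p^{m-w_{d+1}-1}(p-1)$.

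To bound this mean I would exploit $\mathcal{G}_{p,m-w_{d+1}}(x^m)\subseteq G_{p,m-w_{d+1}}$ and $\theta\geq 0$ to relax the average to $G_{p,m-w_{d+1}}$, paying a factor $p/(p-1)\leq 2$ (the source of the constant $2$ in the statement). Swapping summation orders reduces the task to counting, for fixed $h_{d+1}\neq 0$ and $\bsh'=(h_1,\dots,h_d)\in G_{p,m}^d$, the number of $g\in G_{p,m-w_{d+1}}$ satisfying $h_{d+1}x^{w_{d+1}}g\equiv -\sum_{i\leq d}h_ix^{w_i}g_i\pmod{x^m}$. Writing $h_{d+1}=x^a h_{d+1}^{\ast}$ with $\gcd(h_{d+1}^{\ast},x)=1$, the map $g\mapsto h_{d+1}x^{w_{d+1}}g\bmod x^m$ is $\FF_p$-linear with image in $x^{\min(w_{d+1}+a,m)}\FF_p[x]/(x^m)$, so the solution count is $p^{\min(a,m-w_{d+1})}$ when the consistency condition $x^{\min(w_{d+1}+a,m)}\mid\sum_{i\leq d}h_ix^{w_i}g_i$ holds and $0$ otherwise. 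Combined with $r_p(x^ah^{\ast})=p^{-a}r_p(h^{\ast})$, the identity $\sum_{h\in G_{p,m}\setminus\{0\}}r_p(h)=m(p^{2}-1)/(3p)$ (which follows from $\sum_{k=1}^{p-1}\sin^{-2}(\pi k/p)=(p^{2}-1)/3$) and the factorisation $\sum_{\bsh'}\prod r_p(h_i,\gamma_i)=\prod_{i\leq d}(1+\gamma_i+\gamma_im(p^{2}-1)/(3p))\leq \prod_{i\leq d}B_i$ (with $B_i$ denoting the factor in the theorem), the induction hypothesis on $R_{\bsgamma}^d$ then assembles the desired estimate. The factor $p^{w_{d+1}}$ in the bound arises from the normalisation $1/p^{m-w_{d+1}}=p^{w_{d+1}}/p^m$.

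The chief technical obstacle is the careful interplay between the counting factor $p^{\min(a,m-w_{d+1})}$ and the consistency restriction imposed on $\bsh'$: separately these could accumulate an unwanted dependence on $m$ or on $a$, whereas they must cancel to leave only the single clean factor $p^{w_{d+1}}\,m(p^2-1)/(3p)$. A natural way to handle this cancellation is to rewrite the consistency indicator via additive characters on $\FF_p[x]/(x^{\min(w_{d+1}+a,m)})$ and factorise the resulting character sum, so that each coordinate $i\leq d$ contributes at most the factor $1+\gamma_i+\gamma_im(p^2-1)/(3p)\leq B_i$ required by the inductive step.
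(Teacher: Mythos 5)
Your skeleton (induction on $d$, splitting off the terms with $h_{d+1}=0$ to get $R^{d+1}_{\bsgamma}=(1+\gamma_{d+1})R^d_{\bsgamma}+\gamma_{d+1}\theta(g_{d+1})$, and bounding the minimiser by an average) matches the paper, and your base case is fine. The genuine gap is the step where you relax the average over $\mathcal{G}_{p,m-w_{d+1}}(x^m)$ to an average over all of $G_{p,m-w_{d+1}}$ ``paying a factor $p/(p-1)\le 2$''. The inequality itself is valid because $\theta\ge 0$, but the relaxed average is genuinely too large to yield the stated bound, so no subsequent character-sum manipulation can rescue it. The coprimality condition $\gcd(g,x)=1$ in the search set is essential: a polynomial $g\in G_{p,m-w_{d+1}}$ with $x$-adic valuation $b$ behaves like a component with reduction parameter $w_{d+1}+b$, its $\theta$-value is larger by roughly a factor $p^{b}$, while the proportion of such $g$ is only about $p^{-b}$; each valuation class therefore contributes comparably, and there are about $m-w_{d+1}$ classes, which produces an extra factor of order $m$. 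Concretely, already the $\bsh'=\bszero$ part of the relaxed mean equals $\gamma_{d+1}\bigl[\prod_{i\le d}(1+\gamma_i)\bigr]p^{w_{d+1}-m}\tfrac{p^2-1}{3p}\bigl[m+(1-\tfrac1p)\sum_{b=0}^{m-w_{d+1}-1}(w_{d+1}+b)\bigr]$, which grows like $m^2p^{w_{d+1}}/p^m$; for small weights $\gamma_1,\dots,\gamma_d$ and $m$ beyond a modest threshold this alone exceeds the quantity $\tfrac{\gamma_{d+1}}{p^m}\,2p^{\min\{w_{d+1},m\}}m\tfrac{p^2-1}{3p}\prod_{i\le d}\bigl(1+\gamma_i+\gamma_i m\tfrac{p^2-1}{3p}\bigr)$ that you must establish to close the induction with the factor $2p^{\min\{w_i,m\}}m\tfrac{p^2-1}{3p}$. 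The same loss shows up if you follow your own programme to the end: after encoding the consistency condition by characters and bounding each factor $i\le d$ by $1+\gamma_i+\gamma_i m\tfrac{p^2-1}{3p}$, you are left with $\sum_{h\neq 0}r_p(h)\,p^{\min\{a(h),m-w_{d+1}\}}$ (with $a(h)$ the $x$-adic valuation of $h$), which is of order $m^2\tfrac{p^2-1}{3p}$, not $m\tfrac{p^2-1}{3p}$; so at best you would prove the theorem with $m$ replaced by something of order $m^2$.

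The paper avoids this precisely by never enlarging the search set: the condition $\gcd(g,x^m)=1$ is kept and handled exactly via the M\"obius function over $\FF_p[x]$ inside Lemma~\ref{lemma2krilaineupoly}, where the character sum $Y_{p^m,w}(v,x^m)=\sum_{g\in\mathcal{G}_{p,m-w}(x^m)}\sum_{h\neq0}r_p(h)X_p(\tfrac{v}{x^m}hx^{w}g)$ is evaluated in closed form (for the relevant $v$ it equals $-\tfrac{p^2-1}{3p}p^{k}$, i.e.\ there is genuine cancellation over $g$ for fixed $v$), giving $\frac{1}{\#\mathcal{G}_{p,m-w}(x^m)}\sum_{v}|Y_{p^m,w}(v,x^m)|\le 2p^{\min\{w,m\}}m\tfrac{p^2-1}{3p}$. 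In particular, the constant $2$ in the theorem comes from adding the two parts $S_1$ (indices $v$ with $x^{m-w}\mid v$) and $S_2$ (the remaining $v$), each bounded by $p^{\min\{w,m\}}m\tfrac{p^2-1}{3p}$ --- not from the cardinality ratio $p/(p-1)$ as you assert. To repair your argument you would need to keep the average over the coprime set and reproduce this exact cancellation (M\"obius inversion or an equivalent inclusion--exclusion over the valuation of $g$), rather than discarding the coprimality restriction at the outset.
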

As a direct consequence we obtain  the following discrepancy estimate.
\begin{corollary} \label{coro2krilaineupoly}
Let $N=p^m$ and $\bsgamma$, $\bsw$ and $(g_1,\dots,g_s)$ as in Theorem~\ref{theokrilaineupoly}. Then the polynomial lattice point set $\cP\left( (x^{w_1}g_1 , \dotsc , x^{w_s} g_s),x^m \right)$ has a weighted star discrepancy
\begin{align}\label{eq:boundDiscrkrilaineupoly}
D_{N,\bsgamma}^*&\left( (x^{w_1}g_1 , \dotsc , x^{w_s} g_s),x^m \right) \nonumber\\
 &\leq \sum_{\substack{\mathfrak{u}\subseteq [s] \\ \mathfrak{u}\neq \emptyset}}{\gamma_{\mathfrak{u}}\left( 1- \left( 1 - \frac{1}{N} \right)^{|\mathfrak{u}|} \right)} + \frac{1}{N} \prod_{i=1}^{s}{\left( 1+\gamma_i + \gamma_i 2 p^{\min{\{ w_i, m \}}} m\frac{p^2-1}{3p}  \right)}.
\end{align}
\end{corollary}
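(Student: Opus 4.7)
The plan is to deduce the bound directly by combining two ingredients that are already available: the general upper bound \eqref{generalkrilaineupoly} on the weighted star discrepancy of an arbitrary polynomial lattice point set in terms of the quantity $R^s_{\bsgamma}$, and Theorem~\ref{theokrilaineupoly}, which provides precisely a bound on $R^s_{\bsgamma}$ for the vector produced by Algorithm~\ref{reducedCbckrilaineupoly}.

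First I would justify that the inequality \eqref{generalkrilaineupoly} is applicable to the generating vector $(x^{w_1}g_1, \dotsc, x^{w_s}g_s)$. Whenever $w_i < m$ and $g_i \in G_{p,m-w_i}(x^m)$, we have $\deg(g_i) < m - w_i$, so $\deg(x^{w_i}g_i) < m$, and hence $x^{w_i}g_i \in G_{p,m}$. For the coordinates with $w_i \geq m$ we have $g_i = 1$ and $x^{w_i} \equiv 0 \pmod{x^m}$, so after reduction modulo $f(x) = x^m$ (which does not alter the polynomial lattice point set, since the map $\phi_m$ only depends on the Laurent expansion of $x^{w_i}g_i / x^m$ truncated at coefficient $m$) the corresponding component lies in $G_{p,m}$ as well. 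Hence the whole generating vector belongs to $G_{p,m}^s$ and \eqref{generalkrilaineupoly} yields
$$ D_{N,\bsgamma}^*\!\bigl((x^{w_1}g_1,\dotsc,x^{w_s}g_s),x^m\bigr) \leq \sum_{\substack{\mathfrak{u}\subseteq [s] \\ \mathfrak{u}\neq \emptyset}}\gamma_{\mathfrak{u}}\left(1-\left(1-\tfrac{1}{N}\right)^{|\mathfrak{u}|}\right) + R^s_{\bsgamma}\!\bigl((x^{w_1}g_1,\dotsc,x^{w_s}g_s),x^m\bigr). $$

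Next I would invoke Theorem~\ref{theokrilaineupoly} with $d = s$ to bound the second summand by
$$ R^s_{\bsgamma}\!\bigl((x^{w_1}g_1,\dotsc,x^{w_s}g_s),x^m\bigr) \leq \frac{1}{p^m}\prod_{i=1}^s\!\left(1+\gamma_i+\gamma_i\,2p^{\min\{w_i,m\}}m\,\frac{p^2-1}{3p}\right). $$
Since $N = p^m$, substituting this into the previous inequality produces exactly \eqref{eq:boundDiscrkrilaineupoly}, completing the argument.

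Because everything reduces to quoting previously established results, there is no real obstacle in this step; the only point that requires a moment of care is the interpretation of $x^{w_i}g_i$ when $w_i \geq m$, and this is resolved simply by noting that reduction modulo $x^m$ leaves the polynomial lattice point set unchanged. All substantive work has been carried out in Theorem~\ref{theokrilaineupoly}; the corollary is just a packaging step.
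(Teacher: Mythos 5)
Your proposal is correct and matches the paper's (implicit) argument exactly: the corollary is stated as a direct consequence of combining the general bound \eqref{generalkrilaineupoly} with Theorem~\ref{theokrilaineupoly} at $d=s$ and $N=p^m$. Your extra care about the components with $w_i\geq m$ (reducing $x^{w_i}g_i$ modulo $x^m$ so the vector lies in $G_{p,m}^s$) is a reasonable point the paper leaves tacit, but it changes nothing of substance.
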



Knowing the above discrepancy bound, we are now ready to ask about the size of the polynomial lattice point set required to achieve a weighted star discrepancy not exceeding some $\varepsilon$ threshold. In particular, we would like to know how this size depends on the dimension $s$ and on $\varepsilon$.

\begin{corollary}\label{cor:tractabilitykrilaineupoly}
Let $N=p^m$, $\bsgamma$ and $\bsw$ as in Theorem~\ref{theokrilaineupoly} and consider the problem of constructing generating vectors for polynomial lattice point sets with small weighted star discrepancy. Then
\begin{align*}
\sum_{j=1}^\infty \gamma_j p^{w_j} < \infty
\end{align*}
is a sufficient condition for strong polynomial tractability. This condition further implies $D_{N,\bsgamma}^*\left( (x^{w_1}g_1 , \dotsc , x^{w_s} g_s),x^m \right) = \mathcal{O}(N^{-1 +\delta})$, with the implied constant independent of $s$, for any $\delta >0$, where $(g_1,\dots,g_s)\in \mathcal{G}_{p,m-\bsw}^s(x^m)$ is constructed using Algorithm \ref{reducedCbckrilaineupoly}.
\end{corollary}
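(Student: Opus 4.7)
The plan is to estimate the two summands in the bound of Corollary~\ref{coro2krilaineupoly} separately and to deduce $D^*_{N,\bsgamma}=\mathcal{O}(N^{-1+\delta})$ for every $\delta>0$ with an $s$-independent constant; strong polynomial tractability then follows because $C(\delta,\bsgamma)N^{-(1-\delta)}\le\varepsilon$ yields $N^*(s,\varepsilon)\le C(\delta,\bsgamma)^{1/(1-\delta)}\varepsilon^{-1/(1-\delta)}$, which is polynomial in $\varepsilon^{-1}$ and independent of $s$.

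For the first summand I would invoke Joe's inequality quoted just before Algorithm~\ref{reducedCbckrilaineupoly}: since $p^{w_j}\ge 1$ we have $\sum_j\gamma_j\le\sum_j\gamma_j p^{w_j}<\infty$ by hypothesis, hence the summand is at most $\max(1,\Gamma)\mathrm{e}^{S}/N$ with $S:=\sum_j\gamma_j$ and $\Gamma:=\sum_j\gamma_j/(1+\gamma_j)$, both $s$-independent. For the product summand, set $C:=2(p^2-1)/(3p)$, use $p^{\min\{w_i,m\}}\le p^{w_i}$, and apply $1+x\le\mathrm{e}^x$ to obtain
\begin{equation*}
\frac{1}{N}\prod_{i=1}^s\bigl(1+\gamma_i+\gamma_i Cmp^{w_i}\bigr)\le \frac{\mathrm{e}^{S}}{N}\exp\!\Bigl(Cm\sum_{i=1}^s\gamma_i p^{w_i}\Bigr).
\end{equation*}
With $T:=\sum_{i}\gamma_i p^{w_i}<\infty$ and $m=\log_p N$, this right-hand side is $\mathrm{e}^S N^{CT/\ln p-1}$, i.e.\ $\mathcal{O}(N^{-1+\eta})$ for the single weight-dependent exponent $\eta=CT/\ln p$, which may well exceed a prescribed $\delta$.

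The remaining and decisive step is a splitting trick. Given $\delta>0$, convergence of $\sum_i\gamma_i p^{w_i}$ lets me pick $i_0=i_0(\delta)$ with $C\sum_{i>i_0}\gamma_i p^{w_i}<(\delta/2)\ln p$. Splitting the product at $i_0$, the tail is bounded by $\mathrm{e}^S\exp\!\bigl(Cm\sum_{i>i_0}\gamma_i p^{w_i}\bigr)\le \mathrm{e}^S N^{\delta/2}$, while the head is a product of only $i_0$ factors, each of order $\mathcal{O}(m)$, hence $\mathcal{O}((\log N)^{i_0})=\mathcal{O}(N^{\delta/2})$ for $N$ larger than a threshold that depends only on $\delta$ and on the first $i_0$ weights. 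Combining the two factors gives $D^*_{N,\bsgamma}\le C(\delta,\bsgamma)N^{-1+\delta}$ with the constant independent of $s$, and the tractability claim follows. The main obstacle is precisely this splitting step: the elementary exponential bound alone converts the finiteness of $T$ only into one fixed improvement over $1/N$; it is exploiting that the tail sums $\sum_{i>i_0}\gamma_i p^{w_i}$ can be made arbitrarily small that turns the hypothesis into an arbitrary $\delta>0$.
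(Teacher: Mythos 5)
Your argument is correct and is essentially the same one the paper uses, except that the paper simply outsources it to the argumentation of \cite[Section~5]{KL15} after noting $2m\frac{p^2-1}{3p}=\mathcal{O}(\log N)$: Joe's bound for the first term of Corollary~\ref{coro2krilaineupoly}, the elementary estimate $1+x\le \mathrm{e}^x$ together with the splitting of the product at a weight-dependent index $i_0(\delta)$ is precisely the standard device employed there. The only cosmetic point is that in the tractability step $N$ ranges over powers of $p$, so one should take the smallest admissible $m$, which costs at most an extra factor $p$ in the bound on $N^{\ast}(s,\varepsilon)$ and does not affect the conclusion.
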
 

\begin{proof}
Construct a generating vector $(g_1, \dots, g_s) \in \mathcal{G}_{p,m-w}^s(x^m)$ applying Algorithm~\ref{reducedCbckrilaineupoly} and consider its weighted star discrepancy, bounded by \eqref{eq:boundDiscrkrilaineupoly}. Following closely the lines of the argumentation in \cite[Section 5]{KL15} and noticing that $2m \frac{p^2-1}{3p} = \mathcal{O}(\log N)$ we obtain the result. More precisely, provided that the $\gamma_j p^{w_j}$'s are summable, we have a means to construct polynomial lattice point sets $\mathcal{P}(\bsg,f)$ with $D^*_{N,\bsgamma}(\bsg,f)\leq \varepsilon$, whose sizes grow polynomially in $\varepsilon^{-1}$ and are independent of the dimension. As a result the problem is strongly polynomially tractable. The discrepancy result $D_{N,\bsgamma}^*\left( (x^{w_1}g_1 , \dotsc , x^{w_s} g_s),x^m \right) = \mathcal{O}(N^{-1 +\delta})$ also follows directly from \cite{KL15}. 
\end{proof}

\begin{remark}
Recall that $t = \max\{ j \in \NN \colon w_j < m \}$ and note that setting $w_j = m$ for all $j > t$ does neither change the bound on the weighted star discrepancy nor the computational cost of Algorithm \ref{reducedCbckrilaineupoly}. It might change the generating vector though. If so, however, only components with very little influence on the quality of the point set are altered. Defining $w_j = m$ for all $j > t$, it suffices to have a summable weight sequence $\bsgamma$ in order to achieve strong polynomial tractability, as long as $t$ is finite.
\end{remark} 

In order to show Theorem~\ref{theokrilaineupoly} we need several auxiliary results. 

\begin{lemma} \label{summekrilaineupoly} Let $a\in \FF_p[x]$ be monic. Then we have 
   $$\sum_{\substack{h\in G_{p,m}\setminus\{0\}\\ a\mid h}}r_p(h)=\left(m-\deg(a)\right)\frac{p^2-1}{3p}p^{-\deg(a)}.$$
	In particular, for $a=1$ this formula yields
	 $$ \sum_{h\in G_{p,m}\setminus\{0\}}r_p(h)=m\frac{p^2-1}{3p}. $$
\end{lemma}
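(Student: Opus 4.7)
The plan is to reduce the problem to the special case $a=1$ via a change of variable, and then evaluate the resulting sum by splitting over degrees and invoking a classical trigonometric identity.

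First I would exploit divisibility. Since $a \mid h$ and $a$ is monic, I can write uniquely $h = a k$ with $k \in \FF_p[x]$. The condition $h \in G_{p,m}\setminus\{0\}$ translates to $k \in G_{p,m-\deg(a)}\setminus\{0\}$, because $\deg(h) = \deg(a) + \deg(k)$. Crucially, because $a$ is monic, the leading coefficient of $h$ equals the leading coefficient of $k$; that is, if $k = k_0 + k_1 x + \dots + k_b x^b$ with $k_b \neq 0$, then $h$ has degree $b + \deg(a)$ with the same leading coefficient $k_b$. From the definition of $r_p$, this yields
\begin{equation*}
r_p(h) = \frac{1}{p^{b+\deg(a)+1}\sin^2(\pi k_b/p)} = p^{-\deg(a)}\, r_p(k).
\end{equation*}
Hence the sum becomes
\begin{equation*}
\sum_{\substack{h\in G_{p,m}\setminus\{0\}\\ a\mid h}} r_p(h) = p^{-\deg(a)} \sum_{k \in G_{p,m-\deg(a)}\setminus\{0\}} r_p(k),
\end{equation*}
so it remains to prove the $a=1$ case: $\sum_{k \in G_{p,n}\setminus\{0\}} r_p(k) = n(p^2-1)/(3p)$ for every $n\geq 1$.

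Next I would evaluate this sum by stratifying according to $\deg(k)$. For each $b \in \{0,1,\dots,n-1\}$, a polynomial $k$ of degree exactly $b$ has leading coefficient $k_b \in \{1,\dots,p-1\}$ and arbitrary lower coefficients $k_0,\dots,k_{b-1} \in \FF_p$, contributing $p^b$ choices that do not affect $r_p(k)$. Thus
\begin{equation*}
\sum_{k \in G_{p,n}\setminus\{0\}} r_p(k) = \sum_{b=0}^{n-1} p^b \sum_{k_b=1}^{p-1} \frac{1}{p^{b+1}\sin^2(\pi k_b/p)} = \frac{1}{p}\sum_{b=0}^{n-1} \sum_{j=1}^{p-1} \frac{1}{\sin^2(\pi j/p)}.
\end{equation*}

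Finally I would invoke the classical identity $\sum_{j=1}^{p-1} 1/\sin^2(\pi j/p) = (p^2-1)/3$, which is standard and can be derived, e.g., from the partial-fraction expansion of $\cot$ or from the roots of Chebyshev polynomials. Substituting gives $n(p^2-1)/(3p)$, and combining with the reduction step yields the claimed identity. The special case $a=1$ (with $\deg(a)=0$) recovers the second formula immediately. I do not anticipate any real obstacle: the only subtle point is ensuring that monicness of $a$ transfers the leading coefficient of $h$ to $k$, which makes the factorization $r_p(h) = p^{-\deg(a)} r_p(k)$ clean.
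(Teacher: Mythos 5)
Your proof is correct. Note, though, that the paper does not actually prove this lemma from scratch: it simply cites \cite{DKLP07} (p.~1055, with $\gamma_{d+1}=1$) and, for the case $a=1$, \cite[Lemma~2.2]{DLP05}. Your argument is therefore a self-contained replacement for that citation, and it is clean: the substitution $h=ak$ together with the monicity of $a$ (so that the leading coefficient of $h$ equals that of $k$ and $\deg(h)=\deg(a)+\deg(k)$) gives exactly $r_p(h)=p^{-\deg(a)}r_p(k)$, the bijection $h\leftrightarrow k$ maps the divisibility-constrained sum over $G_{p,m}\setminus\{0\}$ onto the full sum over $G_{p,m-\deg(a)}\setminus\{0\}$, and the stratification by degree plus the classical identity $\sum_{j=1}^{p-1}\sin^{-2}(\pi j/p)=(p^2-1)/3$ evaluates the latter; this is essentially the same computation carried out in the cited references, so nothing is lost and transparency is gained. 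The only (harmless) caveat, shared by the paper's statement itself, is that the formula is meaningful only for $\deg(a)\leq m$ (for $\deg(a)=m$ both sides are $0$, and for $\deg(a)>m$ the sum is empty while the right-hand side would be negative); in all applications in the paper one has $a\mid x^{m-w}$, so $\deg(a)\leq m$ and your argument covers every case that is used.
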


\begin{proof} This fact follows from \cite[p. 1055]{DKLP07} (by setting $\gamma_{d+1}=1$). The special case $a=1$ also follows from \cite[Lemma 2.2]{DLP05} by setting $s=1$.
\end{proof}

For our purposes, it is convenient to write $R^s_{\bsgamma}(\bsg,f)$ from~\eqref{rgammakrilaineupoly} in an alternative way. To this end, we introduce some notation. For a Laurent series $L\in \mathbb{F}_p((x^{-1}))$ we denote by $c_{-1}(L)$ its coefficient of $x^{-1}$, i.e., its residuum.
Further, we set $X_p(L):=\chi_p(c_{-1}(L))$, where $\chi_p$ is a non-trivial additive character of $\mathbb{F}_p$. One could for instance choose
$\chi_p(n)=\mathrm{e}^{\frac{2\pi\ii}{p} n}$ for $n\in \FF_p$ (see, e.g.,~\cite{Niedfin}). It is clear that $X_p(L)=1$ if $L$ is a polynomial and that $X_p(L_1+L_2)=X_p(L_1)X_p(L_2)$
for $L_1,L_2\in\mathbb{F}_p((x^{-1}))$. From \cite[p. 78]{Nied92book} we know that
\begin{equation} \label{Niedformelkrilaineupoly} \sum_{v\in G_{p,m}}X_p\left(\frac{v}{f}g\right)=\begin{cases}
                                           p^m & \mbox{if \,} f\mid g, \\
                                           0 & \mbox{otherwise.}
                                       \end{cases} \end{equation}

\begin{lemma} \label{alternativekrilaineupoly} We have $$ 
   R^s_{\bsgamma}(\bsg,f)=-\prod_{i=1}^s (1+\gamma_i)+\frac{1}{p^m}\sum_{v\in G_{p,m}}\prod_{i=1}^{s}\left(1+\gamma_i+\gamma_i\sum_{h \in G_{p,m}\setminus \{0\}}r_p(h)X_p\left(\frac{v}{f}h g_i\right)\right).  $$
\end{lemma}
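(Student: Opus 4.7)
The plan is to start from the definition \eqref{rgammakrilaineupoly} of $R^s_{\bsgamma}(\bsg,f)$ and use the character sum identity \eqref{Niedformelkrilaineupoly} to encode the divisibility condition $\bsh\cdot\bsg\equiv 0\bmod f$ analytically. Concretely, writing
\[
\mathbf{1}[f\mid \bsh\cdot\bsg]\;=\;\frac{1}{p^m}\sum_{v\in G_{p,m}}X_p\!\left(\frac{v}{f}\bsh\cdot\bsg\right),
\]
we can extend the sum defining $R^s_{\bsgamma}(\bsg,f)$ to all of $G_{p,m}^s$ (including $\bsh=\bszero$) at the cost of subtracting the zero-vector contribution $\prod_{i=1}^s r_p(0,\gamma_i)=\prod_{i=1}^s(1+\gamma_i)$, which is exactly the first term on the right-hand side.

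Next I would swap the order of the two summations, so that $v$ ranges outside and $\bsh$ inside. Using the multiplicativity $X_p(L_1+L_2)=X_p(L_1)X_p(L_2)$ together with the product structure $\prod_i r_p(h_i,\gamma_i)$, the inner sum over $\bsh\in G_{p,m}^s$ factorises as a product of one-dimensional sums
\[
\prod_{i=1}^s\sum_{h\in G_{p,m}} r_p(h,\gamma_i)\,X_p\!\left(\frac{v}{f}hg_i\right).
\]
In each factor I would split off the term $h=0$: here $r_p(0,\gamma_i)=1+\gamma_i$ and $X_p(0)=1$, while for $h\neq 0$ we have $r_p(h,\gamma_i)=\gamma_i r_p(h)$. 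This yields exactly the bracket $1+\gamma_i+\gamma_i\sum_{h\in G_{p,m}\setminus\{0\}}r_p(h)X_p(vhg_i/f)$ appearing in the statement.

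Combining the two steps gives the claimed identity. There is no real obstacle here; the only minor care point is the bookkeeping of the $\bsh=\bszero$ correction term and observing that $X_p$ of any polynomial equals $1$ (so that $X_p(v\cdot 0\cdot g_i/f)=1$), which ensures the $h=0$ contribution is simply $1+\gamma_i$ independently of $v$.
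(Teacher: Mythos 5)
Your argument is correct and is essentially identical to the paper's own proof: both encode the congruence $\bsh\cdot\bsg\equiv 0 \bmod f$ via the character sum \eqref{Niedformelkrilaineupoly}, subtract the $\bsh=\bszero$ contribution $\prod_{i=1}^s(1+\gamma_i)$, interchange the sums over $v$ and $\bsh$, factorise the inner sum coordinate-wise, and split off the $h=0$ term in each factor. Nothing further is needed.
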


\begin{proof}
We employ the properties of $X_p$ as stated above to obtain from~\eqref{rgammakrilaineupoly}
\begin{align*}
   R^s_{\bsgamma}(\bsg,f)=&-\prod_{i=1}^s (1+\gamma_i)+\frac{1}{p^m}\sum_{\bsh\in G_{p,m}^s}\left(\prod_{i=1}^{s}r_p(h_i,\gamma_i)\right)\sum_{v\in G_{p,m}}X_p\left(\frac{v}{f}\bsh\cdot \bsg\right) \\
   =&-\prod_{i=1}^s (1+\gamma_i)+\frac{1}{p^m}\sum_{v\in G_{p,m}}\prod_{i=1}^{s}\left(\sum_{h_i \in G_{p,m}}r_p(h_i,\gamma_i)X_p\left(\frac{v}{f}h_ig_i\right)\right) \\
   =&-\prod_{i=1}^s (1+\gamma_i)+\frac{1}{p^m}\sum_{v\in G_{p,m}}\prod_{i=1}^{s}\left(1+\gamma_i+\gamma_i\sum_{h \in G_{p,m}\setminus \{0\}}r_p(h)X_p\left(\frac{v}{f}h g_i\right)\right),
\end{align*}
and the claimed formula is verified.
\end{proof}

Now we study a sum which will appear later in the proof of Theorem~\ref{theokrilaineupoly} and show an upper bound for it.

\begin{lemma} \label{lemma2krilaineupoly} Let $w\in\NN_0$ and $v\in G_{p,m}$. Let 
$$Y_{p^m,w}(v,x^m):=\sum_{g\in \mathcal{G}_{p,m-w}(x^m)}\sum_{h\in G_{p,m}\setminus\{0\}}r_p(h)X_p\left(\frac{v}{x^m}h x^{w}g\right),$$
where $x^w$ denotes the polynomial $f(x)=x^w$. Then we have
$$\frac{1}{\#\mathcal{G}_{p,m-w}(x^m)}\sum_{v\in G_{p,m}}|Y_{p^m,w}(v,x^m)|\leq 2p^{\min\{w,m\}}m\frac{p^2-1}{3p}.$$
\end{lemma}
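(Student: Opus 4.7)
The plan is to apply the character-sum identity~\eqref{Niedformelkrilaineupoly} to evaluate the inner sum over $g$ in closed form, thereby reducing everything to sums of $r_p(h)$ over divisibility classes $\{h: x^c \mid h\}$, which Lemma~\ref{summekrilaineupoly} handles explicitly.

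The case $w \ge m$ is immediate: then $\mathcal{G}_{p,m-w}(x^m) = \{1\}$ and $x^w/x^m = x^{w-m}$ is a polynomial, so $X_p$ equals $1$ throughout and by Lemma~\ref{summekrilaineupoly} we have $Y_{p^m,w}(v,x^m) = m(p^2-1)/(3p)$ for every $v \in G_{p,m}$. Summing over the $p^m$ values of $v$ gives exactly half the claimed bound.

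For $w < m$ I decompose $\mathcal{G}_{p,m-w}(x^m) = G_{p,m-w} \setminus \{g \in G_{p,m-w}: x \mid g\}$ and reparametrise the ``forbidden'' subset as $\{xg': g' \in G_{p,m-w-1}\}$. Applying~\eqref{Niedformelkrilaineupoly} to each piece, with moduli $x^{m-w}$ and $x^{m-w-1}$ respectively, the sum over $g$ collapses to indicator functions and yields
$$
Y_{p^m,w}(v,x^m) \;=\; p^{m-w}\sum_{\substack{h\in G_{p,m}\setminus\{0\} \\ x^{m-w}\mid vh}} r_p(h) \;-\; p^{m-w-1}\sum_{\substack{h\in G_{p,m}\setminus\{0\} \\ x^{m-w-1}\mid vh}} r_p(h).
$$
Next I factor $v = x^a v_1$ with $\gcd(v_1,x) = 1$ (and treat $v = 0$ as the case $a = \infty$). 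Since $\gcd(v_1,x) = 1$, the condition $x^c \mid vh$ reduces to $x^{\max(0,c-a)} \mid h$, and Lemma~\ref{summekrilaineupoly} evaluates each inner sum in closed form. A short case analysis on $a$ then reveals a near-total cancellation between the two terms of the displayed identity: $|Y_{p^m,w}(v,x^m)| = p^a(p^2-1)/(3p)$ for $0 \le a \le m-w-1$, while $|Y_{p^m,w}(v,x^m)| = m(p-1)p^{m-w-1}(p^2-1)/(3p)$ when $v = 0$ or when $v \neq 0$ with $a \ge m-w$. Using that there are $(p-1)p^{m-1-a}$ polynomials $v \in G_{p,m}$ with a given value of $a$, a direct count gives $\sum_{v\in G_{p,m}} |Y_{p^m,w}(v,x^m)| = (2m - w)(p-1)p^{m-1}(p^2-1)/(3p)$; dividing by $\#\mathcal{G}_{p,m-w}(x^m) = (p-1)p^{m-w-1}$ produces $(2m-w)p^w(p^2-1)/(3p) \le 2mp^w(p^2-1)/(3p)$, as required.

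The hard part is recognising and preserving the cancellation between the two terms of $Y_{p^m,w}(v,x^m)$: a bound that estimates the two pieces separately (say by a triangle inequality applied before the $g$-sum is evaluated) would yield only $\mathcal{O}(p^m m)$ on the right-hand side, losing exactly the factor $p^{m-w}$ of savings that the reduction of the search set is supposed to deliver.
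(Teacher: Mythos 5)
Your proof is correct and is essentially the paper's own argument in lightly different packaging: your two-term decomposition $\mathcal{G}_{p,m-w}(x^m)=G_{p,m-w}\setminus x\,G_{p,m-w-1}$ is exactly what the paper's M\"obius-function step over the divisors of $x^{m-w}$ reduces to, and both proofs then collapse the $g$-sum with the character identity \eqref{Niedformelkrilaineupoly}, evaluate the remaining $r_p$-sums with Lemma~\ref{summekrilaineupoly}, and exploit the same cancellation (your $|Y|=p^{a}\frac{p^2-1}{3p}$ is the paper's $|Y(\ell)|=p^{k}\frac{p^2-1}{3p}$ with $k=e(\ell)$). Classifying $v$ by its $x$-adic valuation instead of reducing $v$ modulo $x^{m-w}$, and summing to the exact value $(2m-w)p^{w}\frac{p^2-1}{3p}$ rather than bounding $S_1+S_2$, are only cosmetic differences, so the two proofs coincide in substance.
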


\begin{proof}
   Let us first assume that $w\geq m$. Then we have $\mathcal{G}_{p,m-w}(x^m)=\{1\}$ and therefore
   \begin{align*}
       Y_{p^m,w}(v,x^m)=\sum_{h\in G_{p,m}\setminus\{0\}}r_p(h)X_p(vhx^{w-m})=\sum_{h\in G_{p,m}\setminus\{0\}}r_p(h)=m\frac{p^2-1}{3p}
   \end{align*}
	with Lemma~\ref{summekrilaineupoly}.
   This leads to
   $$\frac{1}{\#\mathcal{G}_{p,m-w}(x^m)}\sum_{v\in G_{p,m}}|Y_{p^m,w}(v,x^m)|=p^m m\frac{p^2-1}{3p}\leq 2p^{\min\{w,m\}}m\frac{p^2-1}{3p}$$
   in this case. For the rest of the proof let $w<m$ and additionally we abbreviate $\#\mathcal{G}_{p,m-w}(x^m)$ by $\#\mathcal{G}$. We write
   \begin{align*}
      \frac{1}{\#\mathcal{G}}\sum_{v\in G_{p,m}}|Y_{p^m,w}(v,x^m)|=& \frac{1}{\#\mathcal{G}}\sum_{\substack{v\in G_{p,m} \\ x^{m-w}\mid v}}|Y_{p^m,w}(v,x^m)|+
      \frac{1}{\#\mathcal{G}}\sum_{\substack{v\in G_{p,m} \\ x^{m-w} \nmid v}}|Y_{p^m,w}(v,x^m)|.
   \end{align*}
   In what follows, we refer to the latter sums as
   $$ S_1:=\frac{1}{\#\mathcal{G}}\sum_{\substack{v\in G_{p,m} \\ x^{m-w}\mid v}}|Y_{p^m,w}(v,x^m)| \text{\quad and \, \,}
     S_2:=\frac{1}{\#\mathcal{G}}\sum_{\substack{v\in G_{p,m} \\ x^{m-w} \nmid v}}|Y_{p^m,w}(v,x^m)|. $$
  We may uniquely write any $v\in G_{p,m}\setminus\{0\}$ in the form $v=qx^{m-w}+\ell,$ where $q,\ell\in\FF_q[x]$ with $\deg(q)<w$ and $\mathrm{deg}(\ell)<m-w$. Using the properties of $X_p$ it is clear that $Y_{p^m,w}(v,x^m)=Y_{p^m,w}(\ell,x^m)$ and hence
  \begin{align*}
     S_1=&\frac{1}{\#\mathcal{G}}\sum_{\substack{v\in G_{p,m} \\ x^{m-w}\mid v}}|Y_{p^m,w}(0,x^m)|
       =\sum_{\substack{v\in G_{p,m} \\ x^{m-w}\mid v}}\frac{1}{\#\mathcal{G}}\sum_{g\in \mathcal{G}_{p,m-w}(x^m)}\sum_{h\in G_{p,m}\setminus\{0\}}r_p(h)  \\
       =&\sum_{\substack{v\in G_{p,m} \\ x^{m-w}\mid v}}m\frac{p^2-1}{3p}= p^{\min\{w,m\}} m\frac{p^2-1}{3p}.
  \end{align*}
  
  We move on to $S_2$. Let $e(\ell):=\max\{k\in\{0,1,\dots,m-w-1\}: x^k\mid \ell\}$. With this definition we may display $S_2$ as
	\begin{equation} \label{S2krilaineupoly} S_2=\frac{p^w}{\#\mathcal{G}}\sum_{k=0}^{m-w-1}\sum_{\substack{\ell\in G_{p,m-w}\setminus \{0\} \\ e(\ell)=k}}|Y_{p^m,w}(\ell,x^m)|. \end{equation}
	In the following, we compute $Y_{p^m,w}(\ell,x^m)$ for $\ell\in G_{p,m-w}\setminus \{0\}$ with $e(\ell)=k$. Let $\mu_p$ be the M\"{o}bius function on the set of monic polynomials over $\FF_p$, i.e., $\mu_p:\FF_p[x] \rightarrow \{-1,0,1\}$ and $$\mu_p(h)=\begin{cases}(-1)^{\nu} & \text{if $h$ is squarefree and has $\nu$ irreducible factors,} \\ 0 & \text{else.} \end{cases}$$ 
	The fact that $\mu_p(1)=1$, $\mu_p(x)=-1$ and $\mu_p(x^i)=0$ for $i\in\NN$, $i\geq 2$, yields the equivalence of $\sum_{t\mid \gcd(x^{m-w},g)}\mu_p(t)=1$ and $\gcd(x^{m-w},g)=1$. Therefore we can write
	\begin{align*}
	   Y_{p^m,w}(\ell,x^m)=&\sum_{h\in G_{p,m}\setminus\{0\}}r_p(h)\sum_{g\in G_{p,m-w}}X_p\left(\frac{\ell}{x^{m-w}}hg\right)\sum_{t\mid \gcd(x^{m-w},g)}\mu_p(t) \\
		=&\sum_{h\in G_{p,m}\setminus\{0\}}r_p(h)\sum_{t\mid x^{m-w}}\mu_p(t)\sum_{\substack{g\in G_{p,m-w} \\ t\mid g}}X_p\left(\frac{\ell}{x^{m-w}}hg\right) \\
		=&\sum_{h\in G_{p,m}\setminus\{0\}}r_p(h)\sum_{t\mid x^{m-w}}\mu_p(t)\sum_{\substack{a\in G_{p,m-w-\deg(t)}}}X_p\left(\frac{\ell}{x^{m-w}}hat\right) \\
		=&\sum_{h\in G_{p,m}\setminus\{0\}}r_p(h)\sum_{t\mid x^{m-w}}\mu_p\left(\frac{x^{m-w}}{t}\right)\sum_{\substack{a\in G_{p,\deg(t)}}}X_p\left(\frac{a}{t}h\ell\right) \\
		=&\sum_{h\in G_{p,m}\setminus\{0\}}r_p(h)\sum_{\substack{t\mid x^{m-w}\\ t \mid h\ell}}\mu_p\left(\frac{x^{m-w}}{t}\right)p^{\deg(t)} \\
		=&\sum_{t\mid x^{m-w}}\mu_p\left(\frac{x^{m-w}}{t}\right)p^{\deg(t)}\sum_{\substack{h\in G_{p,m}\setminus\{0\} \\ t \mid h\ell}}r_p(h).
	\end{align*}
	The equivalence of the conditions $t \mid h\ell$ and $\frac{t}{\gcd(t,\ell)}\mid h$ yields
	$$ Y_{p^m,w}(\ell,x^m)=\sum_{t\mid x^{m-w}}\mu_p\left(\frac{x^{m-w}}{t}\right)p^{\deg(t)}\sum_{\substack{h\in G_{p,m}\setminus\{0\} \\ \frac{t}{\gcd(t,\ell)}\mid h}}r_p(h). $$
	We investigate the inner sum and use Lemma~\ref{summekrilaineupoly} with $a=\frac{t}{\gcd(t,\ell)}$ to find
	\begin{align*}
	   \sum_{\substack{h\in G_{p,m}\setminus\{0\} \\ \frac{t}{\gcd(t,\ell)}\mid h}}r_p(h)=
		\left(m-\deg\left(\frac{t}{\gcd(t,\ell)}\right)\right)\frac{p^2-1}{3p}p^{-\deg\left(\frac{t}{\gcd(t,\ell)}\right)}.
	\end{align*}
	Now we have
\begin{align*}
  Y_{p^m,w}(\ell,x^m)=&\frac{p^2-1}{3p}\sum_{t\mid x^{m-w}}\mu_p\left(\frac{x^{m-w}}{t}\right)   \left(m-\deg\left(\frac{t}{\gcd(t,\ell)}\right)\right)p^{\deg(\gcd(t,\ell))} \\
	=& \frac{p^2-1}{3p}m\sum_{t\mid x^{m-w}}\mu_p\left(\frac{x^{m-w}}{t}\right)p^{\deg(\gcd(t,\ell))} \\
	&-\frac{p^2-1}{3p}\sum_{t\mid x^{m-w}}\mu_p\left(\frac{x^{m-w}}{t}\right)\deg\left(\frac{t}{\gcd(t,\ell)}\right)p^{\deg(\gcd(t,\ell))}.
\end{align*}
From the fact that $e(\ell)=k\leq m-w-1$ we obtain $\gcd(x^{m-w},\ell)=\gcd(x^{m-w-1},\ell)=x^k$. This observation leads to
\begin{align*}
   \sum_{t\mid x^{m-w}}\mu_p\left(\frac{x^{m-w}}{t}\right)p^{\deg(\gcd(t,\ell))}=p^{\deg(\gcd(x^{m-w},\ell))}-p^{\deg(\gcd(x^{m-w-1},\ell))}=0
\end{align*}
and
\begin{align*}
   \sum_{t\mid x^{m-w}}&\mu_p\left(\frac{x^{m-w}}{t}\right)\deg\left(\frac{t}{\gcd(t,\ell)}\right)p^{\deg(\gcd(t,\ell))}\\
	   =&
	    \deg\left(\frac{x^{m-w}}{\gcd(x^{m-w},\ell)}\right)p^{\deg(\gcd(x^{m-w},\ell))}-\deg\left(\frac{x^{m-w-1}}{\gcd(x^{m-w-1},\ell)}\right)p^{\deg(\gcd(x^{m-w-1},\ell))} \\
		=&(m-w-k)p^k-(m-w-k-1)p^k=p^k.
\end{align*}
Altogether we have
$$  Y_{p^m,w}(\ell,x^m)=-\frac{p^2-1}{3p}p^k. $$
Inserting this result into~\eqref{S2krilaineupoly} yields
\begin{align*}
  S_2=\frac{p^w}{\#\mathcal{G}}\frac{p^2-1}{3p}\sum_{k=0}^{m-w-1}p^k\sum_{\substack{\ell\in G_{p,m-w}\setminus \{0\} \\ e(\ell)=k}}1.
\end{align*}
Since
\begin{align*}
 \#\{\ell &\in G_{p,m-w}\setminus \{0\}: e(\ell)=k\} \\
	 =&\#\{\ell\in G_{p,m-w}\setminus \{0\}:x^k \mid \ell\}-\#\{ \ell\in G_{p,m-w}\setminus \{0\}:x^{k+1} \mid \ell\} \\
	 =& p^{m-w-k}-1-(p^{m-w-k-1}-1)=p^{m-w-k-1}(p-1),
\end{align*}
we have
\begin{align*}
  S_2=&\frac{p^w}{p^{m-w-1}(p-1)}\frac{p^2-1}{3p}\sum_{k=0}^{m-w-1}p^kp^{m-w-k-1}(p-1) \\
	   =&p^w\frac{p^2-1}{3p}(m-w)\leq p^{\min\{w,m\}} m\frac{p^2-1}{3p}.
\end{align*}
Summarizing, we have shown
\begin{align*}
      \frac{1}{\#\mathcal{G}}\sum_{v\in G_{p,m}}|Y_{p^m,w}(v,x^m)|=S_1+S_2\leq 2p^{\min\{w,m\}} m\frac{p^2-1}{3p},
   \end{align*}
	which completes the proof.

\end{proof}

Now we are ready to prove Theorem~\ref{theokrilaineupoly} using induction on $d$.

\begin{proof}
 We show the result for $d=1$. From Lemma~\ref{alternativekrilaineupoly} we have
  \begin{align*}
   R_{\bsgamma}^1((x^{w_1}),x^m)
   =&-(1+\gamma_1)+\frac{1}{p^m}\sum_{v\in G_{p,m}}\left(1+\gamma_1+\gamma_1\sum_{h \in G_{p,m}\setminus \{0\}}r_p(h)X_p\left(\frac{v}{x^m}h x^{w_1}\right)\right) \\ =&\frac{\gamma_1}{p^m}\sum_{v\in G_{p,m}}\sum_{h \in G_{p,m}\setminus \{0\}}r_p(h)X_p\left(\frac{v}{x^m}h x^{w_1}\right).
\end{align*} 
If $w_1\geq m$, then
\begin{align*}
 R_{\bsgamma}^1((x^{w_1}),x^m)
   =&\frac{\gamma_1}{p^m}\sum_{v\in G_{p,m}}\sum_{h \in G_{p,m}\setminus \{0\}}r_p(h)
   =\frac{\gamma_1}{p^m}p^{\min\{w_1,m\}}m\frac{p^2-1}{3p} \\
   &\leq \frac{1}{p^m}\left(1+\gamma_1+\gamma_12p^{\min\{w_1,m\}}m\frac{p^2-1}{3p} \right).
\end{align*} 
If $w_1<m$, then we can write

\begin{align*}
R_{\bsgamma}^1((x^{w_1}),x^m)
   =&\frac{\gamma_1}{p^m}\sum_{v\in G_{p,m}}\sum_{h \in G_{p,m}\setminus \{0\}}r_p(h)X_p\left(\frac{v}{x^m}h x^{w_1}\right) \\
   =&\frac{\gamma_1}{p^m}\sum_{\substack{h \in G_{p,m}\setminus \{0\} \\ x^{m-w_1} \mid h}}r_p(h)\sum_{v\in G_{p,m}}X_p\left(\frac{v}{x^m}h x^{w_1}\right) \\
   &+\frac{\gamma_1}{p^m}\sum_{\substack{h \in G_{p,m}\setminus \{0\} \\ x^{m-w_1} \nmid h}}r_p(h)\sum_{v\in G_{p,m}}X_p\left(\frac{v}{x^m}h x^{w_1}\right) \\
   =&\gamma_1 \sum_{\substack{h \in G_{p,m}\setminus \{0\} \\ x^{m-w_1} \mid h}}r_p(h),
\end{align*}
where we used~\eqref{Niedformelkrilaineupoly} in the latter step.
We regard Lemma~\ref{summekrilaineupoly} with $a=x^{m-w_1}$ to compute
$$\sum_{\substack{h \in G_{p,m}\setminus \{0\} \\ x^{m-w_1} \mid h}}r_p(h)=\frac{1}{p^m}p^{w_1}w_1\frac{p^2-1}{3p}\leq \frac{1}{p^m}p^{\min\{w_1,m\}}m\frac{p^2-1}{3p},  $$
which leads to the desired result also in this case.\\ 
Now let $d\in [s-1]$. Assume that we have some $(g_1,\dots,g_d) \in \mathcal{G}_{p,m-\bsw}^d(x^m)$ such that
$$ R_{\bsgamma}^d((x^{w_1}g_1,\dots,x^{w_d}g_d),x^m)\leq \frac{1}{p^m}\prod_{i=1}^d \left(1+\gamma_i+\gamma_i 2p^{\min\{w_i,m\}}m\frac{p^2-1}{3p}\right).$$ 

Let $g^{\ast}\in \mathcal{G}_{p,m-w_{d+1}}(x^m)$ be such that $R_{\bsgamma}^{d+1}((x^{w_1}g_1,\dots,x^{w_d}g_d,x^{w_{d+1}}g_{d+1}),x^m)$ is minimized as a function of $g_{d+1}$ for $g_{d+1}=g^{\ast}$. Then we have
\begin{align}\label{Lrealkrilaineupoly}
   R_{\bsgamma}^{d+1}&((x^{w_1}g_1,\dots,x^{w_d}g_d,x^{w_{d+1}}g^{\ast}),x^m)=-(1+\gamma_{d+1})\prod_{i=1}^d(1+\gamma_i) \nonumber\\
       +& \frac{1}{p^m}\sum_{v\in G_{p,m}} \prod_{i=1}^d \left(1+\gamma_i+\gamma_i\sum_{h\in G_{p,m}\setminus\{0\}}r_p(h)X_p\left(\frac{v}{x^m}hx^{w_i}g_i\right)\right) \nonumber\\
        &\times  \left(1+\gamma_{d+1}+\gamma_{d+1}\sum_{h\in G_{p,m}\setminus\{0\}}r_p(h)X_p\left(\frac{v}{x^m}h x^{w_{d+1}}g^{\ast}\right)\right) \nonumber\\
     =& (1+\gamma_{d+1}) R_{\bsgamma}^{d}((x^{w_1}g_1,\dots,x^{w_d}g_d),x^m)+L(g^{\ast}),
\end{align}
where   
   \begin{align*}
   L(g^{\ast})=&\frac{\gamma_{d+1}}{p^m}\sum_{v\in G_{p,m}}  \sum_{h\in G_{p,m}\setminus\{0\}}r_p(h)X_p\left(\frac{v}{x^m}h x^{w_{d+1}}g^{\ast}\right) \\ &\times  
   \prod_{i=1}^d \left(1+\gamma_i+\gamma_i\sum_{h\in G_{p,m}\setminus\{0\}}r_p(h)X_p\left(\frac{v}{x^m}h x^{w_{i}}g_i\right)\right).
   \end{align*} 
A minimizer $g^{\ast}$ of $R_{\bsgamma}^{d+1}((x^{w_1}g_1,\dots,x^{w_d}g_d,x^{w_{d+1}}g_{d+1}),x^m)$ is also a minimizer of $L(g_{d+1})$. Combining \eqref{rgammakrilaineupoly} and \eqref{Lrealkrilaineupoly} we obtain that $R_{\bsgamma}^d(\bsg,f)\in \RR$ for all $d \in [s]$. Moreover with equation \eqref{thetakrilaineupoly}, established later on in Section \ref{sectirredkrilaineupoly},  and the fact that $r_p(h,\gamma)>0$ for all $h\in G_{p,m}$ and $\gamma \in (0,1]$, we get that $L(g) \in \RR^+$ for all $g \in \mathcal{G}_{p,m-w_{d+1}}(x^m)$.  Thus we may bound $L(g^{\ast})$ by the mean over all $g\in \mathcal{G}_{p,m-w_{d+1}}(x^m)$, hence
\begin{align*}
    L(g^{\ast})\leq &\frac{1}{\#\mathcal{G}_{p,m-w_{d+1}}(x^m)}\sum_{g_{d+1} \in \mathcal{G}_{p,m-w_{d+1}}(x^m)}L(g_{d+1}) \\
         \leq & \frac{\gamma_{d+1}}{p^m}\sum_{v\in G_{p,m}}\frac{1}{\#\mathcal{G}_{p,m-w_{d+1}}(x^m)} \\ &\times\left|\sum_{g_{d+1}\in \mathcal{G}_{p,m-w_{d+1}}(x^m)}\sum_{h\in G_{p,m}\setminus\{0\}}r_p(h)X_p\left(\frac{v}{x^m}h x^{w_{d+1}}g_{d+1}\right)\right| \\
         &\times  \prod_{i=1}^d \left(1+\gamma_i+\gamma_i\sum_{h\in G_{p,m}\setminus\{0\}}r_p(h)\left|X_p\left(\frac{v}{x^m}h x^{w_{i}}g_i\right)\right|\right) \\
         \leq& \frac{\gamma_{d+1}}{p^m}\prod_{i=1}^d \left(1+\gamma_i+\gamma_i m\frac{p^2-1}{3p}\right)
\sum_{v\in G_{p,m}}\frac{|Y_{p^m,w_{d+1}}(v,x^m)|}{\#\mathcal{G}_{p,m-w_{d+1}}(x^m)},\end{align*}
where we used the estimate $\left|X_p\left(\frac{v}{x^m}h x^{w_{i}}g_i\right)\right|\leq 1$ in the last step. With the induction hypothesis and Lemma~\ref{lemma2krilaineupoly} this leads to
\begin{align*}
     R_{\bsgamma}^{d+1}&((x^{w_1}g_1,\dots,x^{w_d}g_d,x^{w_{d+1}}g^{\ast}),x^m)  \\ \leq& (1+\gamma_{d+1}) \frac{1}{p^m}\prod_{i=1}^d \left(1+\gamma_i+\gamma_i 2p^{\min\{w_i,m\}}m\frac{p^2-1}{3p}\right) \\
      &+\frac{\gamma_{d+1}}{p^m}\prod_{i=1}^d \left(1+\gamma_i+\gamma_i m\frac{p^2-1}{3p}\right)
2p^{\min\{w_{d+1},m\}}m\frac{p^2-1}{3p} \\ 
 \leq& \frac{1}{p^m}\prod_{i=1}^d \left(1+\gamma_i+\gamma_i 2p^{\min\{w_i,m\}}m\frac{p^2-1}{3p}\right)\left(1+\gamma_{d+1}+\gamma_{d+1} 2p^{\min\{w_{d+1},m\}}m\frac{p^2-1}{3p}\right) \\
    =&\frac{1}{p^m}\prod_{i=1}^{d+1} \left(1+\gamma_i+\gamma_i 2p^{\min\{w_i,m\}}m\frac{p^2-1}{3p}\right).
\end{align*}
\end{proof}

\noindent
{\bfseries The reduced fast CBC construction}\newline 
So far we have seen how to construct a generating vector $\bsg$ of the point set $\mathcal{P}(\bsg,x^m)$. In fact Algorithm \ref{reducedCbckrilaineupoly} can be made much faster using results of \cite{DKLP15,NC, NC2}. In this section we are investigating and improving Algorithm \ref{reducedCbckrilaineupoly} and additionally analyzing the computational cost of the improved algorithm.\\
As explained in the following lines Walsh functions are a suitable tool for analyzing the computational cost of CBC algorithms for constructing polynomial lattice point sets. Let $\omega=\mathrm{e}^{2\pi i/p}$, $x\in[0,1)$ and $h$ a nonnegative integer with base $p$ representation $x=x_1/p + x_2/p^2+\ldots$ and $h=h_0 +h_1p +\ldots +h_rp^r$, respectively. Then we define $$\mathrm{wal}_h:[0,1)\rightarrow \mathbb{C}, \mathrm{wal}_h(x):=\omega^{h_0x_1+\ldots+h_rx_{r+1}}.$$ 
The Walsh function system $\{\wal_h ~|~ h=0,1,\ldots\}$ is a complete orthonormal basis in $L_2([0,1))$ which has been used in the analysis of the discrepancy of digital nets (an important class of low-discrepancy point sets which contains polynomial lattice point sets) several times before, see for example \cite{DLP05,H94,LP03}. For further information on Walsh functions see \cite[Appendix A]{DP}.\\
Let $d\geq 1$, $N=p^m$. For $P(\bsg,f)= \{\bsx_0,\ldots,\bsx_{p^m-1}\}$ with $\bsx_n=(x_n^{(1)}, \ldots, x_n^{(s)})$ we have the formula (see \cite[Section 4]{DLP05})
$$ \frac{1}{p^m} \sum_{n=0}^{p^m-1}\prod_{i=1}^{s}\wal_{h_i}(x^{(i)}_n)=\begin{cases} 1 & \text{if } \bsg\cdot \bsh \equiv 0 \pmod{f}, \\ 0 & \text{otherwise,} \end{cases}$$
which allows us to rewrite $R^d_{\bsgamma}(\bsg,x^m)$ in the following way 
\begin{align*}
R^d_{\bsgamma}(\bsg,x^m) = -\prod_{i=1}^d(1+\gamma_i) + \dfrac{1}{p^m}\sum_{n=0}^{p^m-1}\prod_{i=1}^d\sum_{h=0}^{p^m-1}r_p(h,\gamma_i)\mathrm{wal}_h\left(\phi_m\left(\frac{nx^{w_i}g_i}{x^m}\right)\right).
\end{align*}
Note that $r_p(h,\gamma)$ is defined as in \eqref{rgammakrilaineupoly} and we identify the integer in base $p$ representation $h = h_0 +h_1p +\ldots +h_rp^r$ with the polynomial $h(x)=h_0 +h_1x +\ldots +h_rx^r$.
If we set $\psi(\frac{nx^{w_i}g_i}{x^m}):=\sum_{h=1}^{p^m-1}r_p(h)\mathrm{wal}_h(\phi_m(\frac{nx^{w_i}g_i}{x^m}))$ we get that
\begin{align} 
R^d_{\bsgamma}(\bsg,x^m) &= -\prod_{i=1}^d(1+\gamma_i) + \dfrac{1}{p^m}\sum_{n=0}^{p^m-1}\prod_{i=1}^d\left(1+\gamma_i+\gamma_i\psi\left(\frac{nx^{w_i}g_i}{x^m}\right)\right) \nonumber \\
\label{rgCostkrilaineupoly} &= -\prod_{i=1}^d(1+\gamma_i)+ \dfrac{1}{p^m}\sum_{n=0}^{p^m-1}\eta_d(n), 
\end{align}
where $\eta_d(n)=\prod_{i=1}^d\left(1+\gamma_i+\gamma_i\psi(\frac{nx^{w_i}g_i}{x^m})\right)$.\\
In \cite[Section 4]{DLP05} it is proved that we can compute the at most $N$ different values of $\psi(\frac{r}{x^m})$ for $r\in G_{p,m}$ in $\mathcal{O}(N)$ operations. \\
Let us now analyze one step of the reduced CBC Algorithm \ref{reducedCbckrilaineupoly}. Assuming we already have found $(g_1,\ldots,g_d)\in \mathcal{G}^d_{p,m-\bsw}(x^m)$ we have to minimize 
$$R^{d+1}_{\bsgamma}((x^{w_1}g_1,\ldots,x^{w_{d+1}}g_{d+1}),x^m) $$
as a function of $g_{d+1} \in \mathcal{G}_{p,m-w_{d+1}}(x^m)$. If $w_{d+1} \geq m$ then $g_{d+1}=1$ and we are done. Let now $w_{d+1}<m$. From \eqref{rgCostkrilaineupoly} we have that

\begin{align*}
R^{d+1}_{\bsgamma}((x^{w_1}g_1,\ldots,x^{w_{d+1}}g_{d+1}),x^m) &=-\prod_{i=1}^{d+1}(1+\gamma_i)+ \dfrac{1}{p^m}\sum_{n=0}^{p^m-1}\eta_{d+1}(n)\\
&=-\prod_{i=1}^{d+1}(1+\gamma_i)+ \dfrac{1}{p^m}\sum_{n=0}^{p^m-1}\bigg(1+\gamma_{d+1}+\\
&\gamma_{d+1}\psi\left(\frac{nx^{w_{d+1}}g_{d+1}}{x^m}\right)\eta_{d}(n)\bigg).
\end{align*}
In order to minimize $R^{d+1}_{\bsgamma}((x^{w_1}g_1,\ldots,x^{w_{d+1}}g_{d+1}),x^m)$ it is enough to minimize $T_d(g):=\sum_{n=0}^{p^m-1}\psi(\frac{nx^{w_{d+1}}g}{x^m})\eta_{d}(n)$. As in \cite[Section 4]{DKLP15} we can represent this quantity using some specific $(p^{m-w_{d+1}-1}(p-1)\times N)$-matrix $A$ and exploiting its additional structure. Let therefore 
$$A=\left(\psi\left(\frac{nx^{w_{d+1}}g}{x^m}\right)\right)_{\stackrel{n\in\{0,\ldots,N-1\},}{g\in G_{p,m-w_{d+1}}(x^m) }} \text{ and } \bs{\eta}_d=(\eta_d(0),\ldots,\eta_d(N-1))^{\top} .$$
First of all observe that we get $(T(g))_{g\in G_{p,m-w_{d+1}}(x^m)}=A\bs{\eta}_d$. Secondly the matrix A is a block matrix and can be written in the following form
$$A=\left(\Omega^{(m-w_{d+1})}\hdots \Omega^{(m-w_{d+1})}\right) \text{, where } \Omega^{(l)}=\left(\psi\left(\frac{nx^{w_{d+1}}g}{x^m}\right)\right)_{\stackrel{n\in\{0,\ldots p^{l}-1\}}{g\in G_{p,m-w_{d+1}}}(x^m)}. $$ 
If $\bsx$ is any vector of size $p^m$ then we compute
$$A\bsx = \Omega^{(m-w_{d+1})}\bsx_1+ \ldots + \Omega^{(m-w_{d+1})}\bsx_{b^{w_d}} = \Omega^{(m-w_{d+1})}(\bsx_1 + \ldots + \bsx_{b^{w_d}}).$$
With this representation we can apply the machinery of \cite{NC2, NC} and get that multiplication with $\Omega^{(m-w_{d+1})}$ can be done in $\mathcal{O}((m-w_{d+1})p^{m-w_{d+1}})$ operations. 
Summarizing we have:

\begin{algorithm}\label{reducedfastCbckrilaineupoly}~
	\begin{enumerate}
		\item Compute $\psi(\frac{r}{x^m})$ for $r \in G_{p,m}$.
		\item Set $\eta_1(n) = \psi(\frac{nx^{w_1}g_1}{x^m})$ for $n=0,\ldots,p^m-1$.
		\item Set $g_1=1,~ d=2$ and $t=\max\{j\in [s] ~|~ w_j <m\}$.\\
		While $d\leq \min\{s,t\}$,
		\begin{enumerate}
			\item Partition $\eta_{d-1}$ into $p^{w_d}$ vectors $\eta_{d-1}^{(1)},\ldots,\eta_{d-1}^{(p^{w_d})}$ of length $p^{m-w_d}$ and let $\eta^{\prime}=\sum_{i=1}^{p^{w_d}}\eta_{d-1}^{(i)}$.
			\item Let $T_d(g)=\Omega^{(m-w_d)}\eta^{\prime}$.
			\item Let $g_d= \mathrm{argmin}_gT_d(g)$.
			\item Let $\eta_d(n)=\eta_{d-1}(n)\psi(\frac{nx^{w_d}z_d}{x^m})$
			\item Increase $d$ by 1.
		\end{enumerate}
		\item If $ s\geq t$ then set $g_t=g_{t+1}\ldots=g_s=1$.
	\end{enumerate}
\end{algorithm} 

Similar to \cite{DKLP15} we obtain from the observations in this section the following theorem:
\begin{theorem}
The cost of Algorithm \ref{reducedfastCbckrilaineupoly} is
$$\mathcal{O}\left(p^m + \min\{s,t\}p^m + \sum_{d=1}^{\min\{s,t\}}(m-w_d)p^{m-w_d}\right).$$
\end{theorem}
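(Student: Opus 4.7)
The plan is to bound the cost of Algorithm~\ref{reducedfastCbckrilaineupoly} one step at a time, then add the contributions. Every quantity is an integer of the form $p^{m-w_d}$ or $p^m$, so the only arithmetic subtlety is the matrix-vector product in step~3(b), where the Nuyens--Cools machinery must be invoked.

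First I would handle the preprocessing. Step~1 computes the $N=p^m$ values $\psi(r/x^m)$ in $\mathcal{O}(p^m)$ operations, as established in \cite[Section 4]{DLP05}. Step~2 is a vector of $p^m$ pointwise lookups and multiplications, hence also $\mathcal{O}(p^m)$. This accounts for the leading $p^m$ term in the claimed bound.

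Next I would treat the main loop, which runs $\min\{s,t\}$ times (by construction of $t$, the index set $d>t$ contributes nothing beyond the trivial assignment in step~4). For a single value of $d$: step~3(a) sums $p^{w_d}$ vectors each of length $p^{m-w_d}$, costing $\mathcal{O}(p^m)$ additions; step~3(b), the multiplication $\Omega^{(m-w_d)}\eta'$ with the block-circulant/Vandermonde-type matrix inherited from the polynomial lattice structure, is the heart of the argument and the main obstacle: I would appeal to the fast-CBC transformation of Nuyens--Cools \cite{NC2,NC}, exactly as adapted to the reduced setting in \cite[Section~4]{DKLP15}, to bring the cost down from the naive $\mathcal{O}(p^{2(m-w_d)})$ to $\mathcal{O}((m-w_d)p^{m-w_d})$; step~3(c) is an $\mathcal{O}(p^{m-w_d})$ scan over the candidate values; step~3(d) updates $\eta_d$ coordinate-wise in $\mathcal{O}(p^m)$ operations using the precomputed $\psi$-table from step~1.

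Finally I would sum the per-iteration contributions. Steps 3(a), 3(d) each contribute $\mathcal{O}(p^m)$, so over $\min\{s,t\}$ iterations they account for $\mathcal{O}(\min\{s,t\}\,p^m)$. Steps 3(b) and 3(c) together contribute $\mathcal{O}((m-w_d)p^{m-w_d})$ per iteration, summing to $\mathcal{O}\bigl(\sum_{d=1}^{\min\{s,t\}}(m-w_d)p^{m-w_d}\bigr)$. Step~4 is $\mathcal{O}(s)$ and is absorbed. Adding the preprocessing cost yields exactly
\[
\mathcal{O}\!\left(p^m + \min\{s,t\}p^m + \sum_{d=1}^{\min\{s,t\}}(m-w_d)p^{m-w_d}\right),
\]
as claimed. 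The only non-routine ingredient is verifying that $\Omega^{(m-w_d)}$ still possesses the structure required for the fast multiplication of \cite{NC2,NC}; this is inherited from the choice $f(x)=x^m$ and the fact that the reduced search set $\mathcal{G}_{p,m-w_d}(x^m)$ corresponds precisely to a sub-block on which the fast transform acts, as explained in \cite[Section~4]{DKLP15}.
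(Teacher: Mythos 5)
Your accounting matches the paper's own argument step for step: the $\mathcal{O}(p^m)$ precomputation of the $\psi$-values from \cite[Section~4]{DLP05}, the $\mathcal{O}(p^m)$ per-iteration cost of the partition/summation and of updating $\eta_d$, and the Nuyens--Cools fast multiplication with $\Omega^{(m-w_d)}$ in $\mathcal{O}((m-w_d)p^{m-w_d})$ operations exactly as in \cite[Section~4]{DKLP15}, which is precisely how the paper obtains the stated bound. The only cosmetic looseness is calling the final assignment step ``absorbed'' (its $\mathcal{O}(s)$ bookkeeping cost is simply not counted, here as in the paper), so your proof is correct and follows essentially the same route.
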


\subsection{Polynomial lattice point sets for irreducible $f$}\label{sectirredkrilaineupoly}
Finally we want to consider the special case where $f$ is an irreducible polynomial. So, for this section let $f$ be an irreducible polynomial over $\FF_p$ with $\deg(f)=m$. 

\begin{theorem} \label{R bound irreduciblekrilaineupoly}
Let $\bsgamma$ and $\bsw$ as in Theorem~\ref{theokrilaineupoly} and let $f \in \FF_p[x]$ be an irreducible polynomial with $\deg(f)=m$. Let further $(g_1,\ldots ,g_s)\in \mathcal{G}_{p,m-\bsw}^s(f)$ be constructed according to Algorithm~\ref{reducedCbckrilaineupoly}. Then we have for every $d\in[s]$
$$R_{\bsgamma}^d((x^{w_1}g_1,\ldots,x^{w_d}g_d),f) \leq \dfrac{1}{p^m}\prod_{i=1}^d\left( 1+ \gamma_i+\gamma_ip^{\min\{w_{i},m\}}m\dfrac{p+1}{3}\right) .$$
\end{theorem}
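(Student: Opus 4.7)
The plan is to prove the bound by induction on $d$, mirroring step-for-step the proof of Theorem~\ref{theokrilaineupoly}. The alternative representation in Lemma~\ref{alternativekrilaineupoly} was derived purely from the orthogonality relation \eqref{Niedformelkrilaineupoly}, which is valid for any modulus of degree $m$, so it applies unchanged in the irreducible setting.

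For the base case $d=1$, since $w_1=0$, Lemma~\ref{alternativekrilaineupoly} reduces $R^1_{\bsgamma}(1,f)$ to
\[\frac{\gamma_1}{p^m}\sum_{v\in G_{p,m}}\sum_{h\in G_{p,m}\setminus\{0\}}r_p(h)\,X_p\!\left(\tfrac{vh}{f}\right).\]
By \eqref{Niedformelkrilaineupoly}, the inner sum over $v$ vanishes unless $f\mid h$, which is impossible because $f$ is irreducible of degree $m$ while $0\neq h\in G_{p,m}$ has $\deg h<m$. Hence $R^1_{\bsgamma}(1,f)=0$ and the claim holds trivially.

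For the inductive step, assuming the bound at level $d$, I would write, exactly as in \eqref{Lrealkrilaineupoly},
\[R^{d+1}_{\bsgamma}((\ldots,x^{w_{d+1}}g^*),f)=(1+\gamma_{d+1})R^d_{\bsgamma}((\ldots),f)+L(g^*),\]
with $L$ of the same form as before but with $f$ replacing $x^m$. The positivity of $L$ needed to bound $L(g^*)$ by the mean of $L$ over $g_{d+1}\in\mathcal{G}_{p,m-w_{d+1}}(f)$ again follows from the later-proved real expansion \eqref{thetakrilaineupoly}. Bounding $|X_p|\leq 1$ in the factors corresponding to the first $d$ coordinates and using $\sum_{h\in G_{p,m}\setminus\{0\}}r_p(h)=m(p^2-1)/(3p)\leq m(p+1)/3$ from Lemma~\ref{summekrilaineupoly}, I would obtain
\[L(g^*)\leq \frac{\gamma_{d+1}}{p^m}\prod_{i=1}^d\!\left(1+\gamma_i+\gamma_i m\tfrac{p+1}{3}\right)\cdot\frac{1}{\#\mathcal{G}_{p,m-w_{d+1}}(f)}\sum_{v\in G_{p,m}}|Y_{p^m,w_{d+1}}(v,f)|,\]
and combining with the induction hypothesis reduces everything to the key estimate below.

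The main obstacle is therefore to establish the irreducible analog of Lemma~\ref{lemma2krilaineupoly}, namely
\[\frac{1}{\#\mathcal{G}_{p,m-w}(f)}\sum_{v\in G_{p,m}}|Y_{p^m,w}(v,f)|\leq p^{\min\{w,m\}}m\tfrac{p+1}{3}.\]
For $w\geq m$ the set $\mathcal{G}_{p,m-w}(f)$ collapses to $\{1\}$ and the bound follows directly from Lemma~\ref{summekrilaineupoly}. For $w<m$, irreducibility of $f$ together with $\deg g<m-w<m$ gives $\mathcal{G}_{p,m-w}(f)=G_{p,m-w}\setminus\{0\}$, of cardinality $p^{m-w}-1$. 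I would separate the $v=0$ contribution, which is controlled directly by Lemma~\ref{summekrilaineupoly}, from the contribution of nonzero $v$; in the latter case $\gcd(v,f)=1$ by irreducibility, so multiplication by $v$ is a bijection on $\FF_p[x]/f$ and the $v$-dependence can be absorbed by a substitution inside the character. The residual character sum is then estimated by applying \eqref{Niedformelkrilaineupoly} to the reduction of $x^w g$ modulo $f$. This analysis is noticeably shorter than the $f=x^m$ case because irreducibility sidesteps the Möbius-inversion step over divisors of $x^{m-w}$ used in the proof of Lemma~\ref{lemma2krilaineupoly}, and this is exactly what produces the sharper constant $(p+1)/3$ in place of $2(p^2-1)/(3p)$.
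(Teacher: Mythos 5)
Your overall structure (induction, splitting off the new coordinate, bounding the minimizer by an average) is sound, and your base case is fine, but the proof collapses at exactly the point you defer to: the claimed irreducible analogue of Lemma~\ref{lemma2krilaineupoly},
\[
\frac{1}{\#\mathcal{G}_{p,m-w}(f)}\sum_{v\in G_{p,m}}\bigl|Y_{p^m,w}(v,f)\bigr|\;\leq\;p^{\min\{w,m\}}\,m\,\tfrac{p+1}{3},
\]
is not proved by your sketch and is in fact false in general. Take $w=0$ (a relevant case, since $w_{d+1}$ may be $0$): then $\mathcal{G}_{p,m}(f)=G_{p,m}\setminus\{0\}$, and for $v\neq 0$ the inner sum over $g$ equals $-1$ for every $h\neq 0$ by \eqref{Niedformelkrilaineupoly} (irreducibility gives $f\nmid vh$), so $Y_{p^m,0}(v,f)=-m\frac{p^2-1}{3p}$, while $Y_{p^m,0}(0,f)=(p^m-1)m\frac{p^2-1}{3p}$. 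Hence the left-hand side equals exactly $2m\frac{p^2-1}{3p}$, which exceeds $m\frac{p+1}{3}$ for every $p\geq 3$. So the character-sum route with absolute values cannot do better than the factor $2\frac{p^2-1}{3p}$ of Theorem~\ref{theokrilaineupoly}; your closing heuristic that irreducibility ``produces the sharper constant $(p+1)/3$'' is precisely where the argument fails, and following your plan you would at best prove the theorem with the weaker constant, not the stated one.

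The paper obtains $(p+1)/3$ by a different mechanism that never passes through $|Y|$. Working directly with the congruence form \eqref{rgammakrilaineupoly}, it writes $R^{d+1}_{\bsgamma}=(1+\gamma_{d+1})R^{d}_{\bsgamma}+\theta(g_{d+1})$ as in \eqref{thetakrilaineupoly} and bounds the average of $\theta$ over $g_{d+1}\in\mathcal{G}_{p,m-w_{d+1}}(f)$ by counting solutions: since $f$ is irreducible and $h_{d+1}\neq 0$, $\gcd(f,h_{d+1}x^{w_{d+1}})=1$, so the congruence $h_{d+1}x^{w_{d+1}}g_{d+1}\equiv-\bsh\cdot\bsg \pmod f$ has at most one solution $g_{d+1}$ in the search set (and none when $\bsh\cdot\bsg\equiv 0$). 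This gives $\theta(g^*)\leq\frac{1}{\#\mathcal{G}_{p,m-w_{d+1}}(f)}\gamma_{d+1}m\frac{p^2-1}{3p}\prod_{i=1}^d\bigl(1+\gamma_i+\gamma_i m\frac{p^2-1}{3p}\bigr)$, and the constant $\frac{p+1}{3}$ then comes from $\frac{p^m}{\#\mathcal{G}_{p,m-w_{d+1}}(f)}\leq\frac{p}{p-1}p^{\min\{w_{d+1},m\}}$ together with $\frac{p}{p-1}\cdot\frac{p^2-1}{3p}=\frac{p+1}{3}$. To repair your proof you would need to replace your key estimate by this counting argument (or prove only the weaker bound with $2\frac{p^2-1}{3p}$, which is not the statement of the theorem).
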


\begin{proof}
We will prove this result by induction on $d$. According to Algorithm \ref{reducedCbckrilaineupoly} we know that $g_1=1$ for $d=1$. Therefore $R_{\bsgamma}^1((x^{w_1}g_1),f)=0$ since for all $h\in G_{p,m}$ we have $\deg(h)<m$ and hence the congruence $hx^{w_1}\equiv 0 \pmod{f}$ has no solutions.\\
Let $d\in [s-1]$ and assume that we have already found $(g_1,\ldots,g_d) \in \mathcal{G}_{p,m-\bsw}^d(f)$. For $\bsg=(x^{w_1}g_1,\ldots,x^{w_d}g_d)$ we have from~\eqref{rgammakrilaineupoly} that
\begin{align}\label{thetakrilaineupoly}
	R^{d+1}_{\bsgamma}((\bsg,x^{w_{d+1}}g_{d+1}),f)=(1+\gamma_{d+1})R^{d}_{\bsgamma}(\bsg,f) + \theta(g_{d+1}),
\end{align}
where we proceeded similarly as in the proof of Theorem~\ref{theokrilaineupoly}. Here we have $$\theta(g_{d+1})=\sum_{h_{d+1} \in G_{p,m}\back\{0\}}r_p(h_{d+1},\gamma_{d+1})\sum_{\stackrel{\bsh \in G_{p,m}^d}{\bsh \cdot \bsg \equiv -h_{d+1}x^{w_{d+1}}g_{d+1} \pmod{f}}}\prod_{i=0}^dr_p(h_i,\gamma_i).$$
Let $g^*\in \mathcal{G}_{p,m-w_{d+1}}(f)$ be a minimizer of $R^{d+1}_{\bsgamma}((\bsg,x^{w_{d+1}}g_{d+1}),f)$ as a function of $g_{d+1}$. Therefore $g^*$ also minimizes $\theta(g_{d+1})$. Bounding $\theta(g^*)$ by its mean we obtain

\begin{align*}
	\theta(g^*) \leq& 
	 \frac{1}{\#\mathcal{G}_{p,m-w_{d+1}}(f)} \sum_{h_{d+1} \in G_{p,m}\setminus \{0\}}r_p(h_{d+1},\gamma_{d+1}) \\ &\times\sum_{\bsh \in G_{p,m}^d}\left(\prod_{i=1}^dr_p(h_i,\gamma_i)\right) \sum_{\stackrel{g_{d+1}\in \mathcal{G}_{p,m-w_{d+1}}(f)}{\bs{h}\cdot \bsg \equiv -h_{d+1}x^{w_{d+1}}g_{d+1} \pmod{f}}} 1.
\end{align*}
Observe that $\gcd(f,h_{d+1}x^{w_{d+1}})=1$. Therefore the congruence $h_{d+1}x^{w_{d+1}}g_{d+1}\equiv -\bsh\cdot\bsg \pmod{f}$ has a unique solution in $G_{p,m}$ but not necessarily in $\mathcal{G}_{p,m-w_{d+1}}(f)$. In the case that $-\bsh\cdot\bsg \not\equiv 0 \pmod{f}$ we conclude that the congruence has at most one solution in $\mathcal{G}_{p,m-w_{d+1}}(f)$. If $-\bsh\cdot\bsg \equiv 0 \pmod{f}$ the congruence has no solution in $\mathcal{G}_{p,m-w_{d+1}}(f)$ since $0 \not \in \mathcal{G}_{p,m-w_{d+1}}(f)$. Hence we find by an application of \cite[Lemma 3.3]{DLP05}

\begin{align*}
	\theta(g^*) &\leq\frac{1}{\#\mathcal{G}_{p,m-w_{d+1}}(f)} \sum_{h_{d+1} \in G_{p,m}\back\{0\}}r_p(h_{d+1},\gamma_{d+1})\sum_{\bsh \in G_{p,m}^d}\prod_{i=1}^dr_p(h_i,\gamma_i)\\
	&=\frac{1}{\#\mathcal{G}_{p,m-w_{d+1}}(f)} \left[\prod_{i=1}^d\left(1+\gamma_i+\gamma_im\frac{p^2-1}{3p}\right)\right]\left(\gamma_{d+1}m\frac{p^2-1}{3p}\right).
\end{align*}

\noindent
By \eqref{thetakrilaineupoly} and the induction hypothesis we have that
\begin{align*}
R^{d+1}_{\gamma}&((\bsg,x^{w_{d+1}}g_{d+1}),f) =(1+\gamma_{d+1})R^{d}_{\gamma}(\bsg,f) + \theta(g_{d+1})\\
\leq& \frac{1}{p^m}\prod_{i=1}^d\left( 1+ \gamma_i+\gamma_ip^{\min\{w_{i},m\}}m\dfrac{p+1}{3}\right) \\ 
    &\times \left(1+\gamma_{d+1} + \gamma_{d+1}\frac{p^m}{\#\mathcal{G}_{p,m-w_{d+1}}(f)}m\frac{p^2-1}{3p}\right)\\
\leq& \frac{1}{p^m}\prod_{i=1}^{d+1}\left( 1+ \gamma_i+\gamma_ip^{\min\{w_{i},m\}}m\dfrac{p+1}{3}\right),
\end{align*}
where we used in the latter step that $\frac{p^m}{\#\mathcal{G}_{p,m-w_{d+1}}(f)} \leq \frac{p}{p-1}p^{\min\{w_{d+1},m\}}$. This follows from the fact that $\#\mathcal{G}_{p,m-w_{d+1}}(f) = p^{m-w_{d+1}}-1$ if $w_{d+1}<m$ and $\#\mathcal{G}_{p,m-w_{d+1}}(f)=1$ if $w_{d+1}\geq m$. This finishes the proof of Theorem \ref{R bound irreduciblekrilaineupoly}.
\end{proof}
As an immediate consequence of~\eqref{generalkrilaineupoly} and Theorem \ref{R bound irreduciblekrilaineupoly} we obtain the following result.
\begin{corollary}\label{cor:starDiskrilaineupoly}
Let $N = p^m$, $(w_j)_{j \geq 1}$ be a non-decreasing sequence of nonnegative integers and let $(g_1,\dots,g_s)\in \mathcal{G}_{p,m-\bsw}^s(f)$ for irreducible $f\in G_{p,m}$ be constructed using Algorithm \ref{reducedCbckrilaineupoly}. Then the polynomial lattice point set $\cP\left( (x^{w_1}g_1 , \dotsc , x^{w_s} g_s),f \right)$ has a weighted star discrepancy
\begin{align*}
D_{N,\bsgamma}^*&\left( (x^{w_1}g_1 , \dotsc , x^{w_s} g_s),f \right) \\
 &\leq \sum_{\substack{\mathfrak{u}\subseteq [s] \\ \mathfrak{u}\neq \emptyset}}{\gamma_{\mathfrak{u}}\left( 1- \left( 1 - \frac{1}{N} \right)^{|\mathfrak{u}|} \right)} + \frac{1}{N} \prod_{i=1}^{s}{\left( 1+\gamma_i + \gamma_i  p^{\min{\{ w_i, m \}}} m\frac{p+1}{3}  \right)}.
\end{align*}
\end{corollary}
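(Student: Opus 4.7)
The plan is to observe that this corollary is a direct composition of two results already at our disposal. The first ingredient is inequality \eqref{generalkrilaineupoly}, which bounds the weighted star discrepancy of a polynomial lattice point set $\cP(\bsg,f)$ by a combinatorial sum depending only on the weights together with the quantity $R^s_{\bsgamma}(\bsg,f)$ defined in \eqref{rgammakrilaineupoly}. The second ingredient is Theorem \ref{R bound irreduciblekrilaineupoly}, which for an irreducible modulus $f$ and a generating vector produced by Algorithm \ref{reducedCbckrilaineupoly} furnishes the product bound
\[
R_{\bsgamma}^s((x^{w_1}g_1,\ldots,x^{w_s}g_s),f) \leq \frac{1}{p^m}\prod_{i=1}^s\!\left( 1+ \gamma_i+\gamma_ip^{\min\{w_{i},m\}}m\tfrac{p+1}{3}\right).
\]

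First I would apply \eqref{generalkrilaineupoly} with the vector $\bsg = (x^{w_1}g_1, \ldots, x^{w_s}g_s)$. Strictly speaking \eqref{generalkrilaineupoly} is stated for vectors in $G_{p,m}^s$, but since both the point set $\cP(\bsg,f)$ and the sum defining $R^s_{\bsgamma}(\bsg,f)$ depend on the components of $\bsg$ only through their residues modulo $f$, we may without loss of generality replace each $x^{w_i}g_i$ by its reduction modulo $f$ and the inequality applies. This gives the decomposition of the weighted star discrepancy as the sum of the combinatorial term $\sum_{\emptyset\neq \mathfrak{u}\subseteq [s]}\gamma_{\mathfrak{u}}(1-(1-1/N)^{|\mathfrak{u}|})$ plus $R^s_{\bsgamma}(\bsg,f)$.

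Next I would substitute the bound from Theorem \ref{R bound irreduciblekrilaineupoly} into this decomposition and use $N = p^m$ to rewrite $1/p^m$ as $1/N$. The claimed inequality then appears verbatim. There is no real obstacle to overcome: the entire argument amounts to chaining together \eqref{generalkrilaineupoly} and Theorem \ref{R bound irreduciblekrilaineupoly}, together with the trivial observation that the generating vector constructed by Algorithm \ref{reducedCbckrilaineupoly} is of precisely the form $(x^{w_1}g_1, \ldots, x^{w_s}g_s)$ with $g_i \in \mathcal{G}_{p,m-w_i}(f)$, so that the hypotheses of Theorem \ref{R bound irreduciblekrilaineupoly} are met. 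Consequently the proof can be presented in just a couple of lines.
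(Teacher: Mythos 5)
Your proposal is correct and follows exactly the paper's route: the corollary is obtained by combining the general bound \eqref{generalkrilaineupoly} with the estimate for $R_{\bsgamma}^s$ from Theorem~\ref{R bound irreduciblekrilaineupoly} and writing $1/p^m = 1/N$. Your extra remark that $x^{w_i}g_i$ may be replaced by its residue modulo $f$ so that \eqref{generalkrilaineupoly} applies is a harmless (and slightly more careful) elaboration of what the paper leaves implicit.
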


\begin{remark}
Using the same argumentation as in Corollary~\ref{cor:tractabilitykrilaineupoly} we again obtain the sufficient condition $\sum\limits_{j=1}^\infty \gamma_j p^{w_j} < \infty$
for strong polynomial tractability and for the discrepancy bound $D_{N,\bsgamma}^*\left( (x^{w_1}g_1 , \dotsc , x^{w_s} g_s),f \right) = \mathcal{O}(N^{-1+\delta})$, with the implied constant independent of $s$, for any $\delta >0$.
\end{remark}

\vspace{0.5cm}
\noindent
\textbf{Acknowledgements.} We would like to thank Peter Kritzer and Friedrich Pillichs-hammer for their valuable comments and suggestions which helped to improve our paper.

\noindent{\bfseries Authors' Addresses:} \vspace{3mm}

\noindent Ralph Kritzinger, Mario Neum\"{u}ller, Institut f\"{u}r Finanzmathematik und Angewandte Zahlentheorie, Johannes Kepler Universit\"{a}t Linz, Altenbergerstr. 69, 4040 Linz, Austria. Email: ralph.kritzinger@jku.at, mario.neumueller@jku.at \vspace{3mm}

\noindent Helene Laimer, Johann Radon Institute for Computational and Applied Mathematics (RICAM),
Austrian Academy of Sciences, Altenbergerstr. 69, 4040 Linz, Austria. \\
          Email: helene.laimer@ricam.oeaw.ac.at

\end{document}